\newtheorem{theorem}{Theorem}[section] 
\newtheorem{lemma}[theorem]{Lemma} 
\newtheorem{proposition}[theorem]{Proposition} 
\newtheorem{corollary}[theorem]{Corollary} 
\theoremstyle{definition} 
\newtheorem{definition}[theorem]{Definition} 
\theoremstyle{remark} 
\newtheorem{remark}{Remark}
\newtheorem{example}{Example}
\renewcommand{\exp}{\mathrm{exp}}
\newcommand{\wt}{\widetilde}
\newcommand{\caP}{\mathcal{P}}
\newcommand{\T}{\mathcal T}
\newcommand{\R}{\mathbb R}
\newcommand{\C}{\mathbb C}
\newcommand{\Z}{\mathbb Z}
\newcommand{\Ch}[1]{#1^\vee}
\newcommand\inv{^{-1}}
\newcommand{\g}{\mathfrak g}
\newcommand{\Fbar}{\overline F}
\renewcommand{\int}{\text{int}}
\newcommand{\rank}{\text{rank}}
\newcommand{\diag}{\text{diag}}
\newcommand{\Out}{\text{Out}}
\newcommand{\Int}{\text{Int}}
\newcommand{\Aut}{\text{Aut}}
\newcommand{\Ind}{\mathrm{Ind}}
\newcommand{\Cent}{\text{Cent}}
\newcommand{\Norm}{\text{Norm}}
\newcommand{\Stab}{\text{Stab}}
\newcommand{\Lie}{\mathrm{Lie}}
\newcommand{\LG}{\overset{L}{\vphantom{a}}\negthinspace G}
\begin{document}
\title{The Real Chevalley Involution}
\author{Jeffrey Adams}
\classification{22E50, 11R39}
\thanks{Supported in part by  National Science
Foundation Grants \#DMS-0967566, \#DMS-0968275 and \#DMS-1317523}
\begin{abstract}
The Chevalley involution of a
  connected, reductive algebraic group over an algebraically closed field takes every semisimple element
  to a conjugate of its inverse, and this involution is unique up to
  conjugacy. In the case of the reals we prove the existence of a real Chevally involution, 
which is defined over $\R$, takes every semisimple
  element of $G(\R)$ to a $G(\R)$-conjugate of its inverse, and is unique up to
  conjugacy by $G(\R)$. We derive some consequences, including an analysis of groups for which every 
irreducible representation is self-dual, and a calculation of the Frobenius Schur indicator for such groups. 
\end{abstract}
\maketitle

{\renewcommand{\thefootnote}{}}

\section{Introduction}
\label{s:introduction}
A Chevalley involution $C$ of a connected reductive group 
over an algebraically closed field satisfies $C(h)=h\inv$ for all $h$
in some Cartan subgroup of $G$.  Furthermore, $C$ takes any
semisimple\footnote{George Lusztig pointed out this is true for all
  elements \cite{lusztig_remarks}.} element to a conjugate of its
inverse.  Consequently, in characteristic $0$, for any algebraic representation $\pi$ of $G$,
$\pi^C$ is isomorphic to the contragredient $\pi^*$.

We are interested in the existence, and properties, of  rational
Chevalley involutions. 

\begin{definition}
\label{d:main}
Suppose $G$ is defined over a field $F$, and let $\Fbar$ be an algebraic closure of $F$.
\begin{enumerate}[(1)]
\item 
A Chevalley involution of $G(F)$ is the restriction of a Chevalley involution of $G(\Fbar)$ that is defined 
over $F$.
\item We say an involution of $G(F)$ is {\it dualizing}
if it takes every semisimple element of $G(F)$  to a $G(F)$-conjugate of its inverse.
\end{enumerate}
\end{definition}
We refer to a Chevalley involution of $G(\Fbar)$, which is defined over $F$, as an $F$-rational
Chevalley involution, or simply a rational Chevalley involution if $F$ is understood.

If $F$ is algebraically closed
every   Chevalley involution 
is dualizing, and any two are conjugate by an inner automorphism.
However, if $F$ is not algebraically closed, since not all Cartan subgroups
of $G(F)$ are conjugate, neither result is true in general. 
We are primarily interested in dualizing Chevalley involutions.

For certain classical groups, over any local field, 
there is a dualizing Chevalley involution by \cite[Chapitre 4]{mvw}.

Our main result is the existence of dualizing Chevalley
involutions in general when $F=\R$. Not all of these 
are conjugate  by an inner automorphism of $G(\R)$ (see Example \ref{e:notunique}). In order to have a
uniqueness result, we impose a further restriction.
A Cartan subgroup of $G(\R)$ is said to be fundamental if
it is of minimal split rank. Such a Cartan subgroup is the ``most compact''
Cartan subgroup, and is unique up to conjugation by $G(\R)$. 

\begin{theorem}
\label{t:main}
Suppose $G$ is defined over $\R$.
There is an involution $C$ of $G(\R)$ such that $C(h)=h\inv$ for all
$h$ in some fundamental Cartan subgroup of $G(\R)$. Any such
involution is the restriction of a rational Chevalley involution of
$G(\C)$, and is dualizing. Any two such involutions are conjugate by an inner
automorphism of $G(\R)$.
\end{theorem}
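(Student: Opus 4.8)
The plan is to work throughout inside $G(\C)$, writing $\sigma$ for the complex conjugation that defines $G(\R)$ and fixing a Cartan involution $\theta$ that commutes with $\sigma$, so that $\sigma_c:=\sigma\theta$ is the conjugation of the compact real form. Fix a $\theta$-stable fundamental Cartan subgroup of $G(\R)$ and write its complexification as $H=TA$, with $\theta=1$ on $T$ and $\theta=-1$ on $A$, so that $T(\R)$ is compact, $A(\R)$ is split, and $\dim A$ is minimal. The structural input I would use repeatedly is that a $\theta$-stable Cartan subgroup is fundamental if and only if it has no real roots, so $\sigma$ acts on the root system $R(G,H)$ --- via $\alpha\mapsto-\theta\alpha$ --- without fixed points.

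\emph{Existence.} Choose any Chevalley involution $C_0$ of $G(\C)$ inverting $H$; these exist because $H$ is a maximal torus of $G(\C)$. For $s\in H$ the automorphism $\Int(s)\circ C_0$ again inverts $H$ and again squares to the identity, since $s\cdot C_0(s)=s\cdot s\inv=1$; so the only thing to arrange is an $s$ for which $C:=\Int(s)\circ C_0$ commutes with $\sigma$, i.e.\ is defined over $\R$. Writing $\sigma C_0\sigma\inv=\Int(t_0)\circ C_0$ --- restriction to $H$ shows $\sigma C_0\sigma\inv$ is again a Chevalley involution inverting $H$, whence $t_0\in Z_{G(\C)}(H)=H$ --- this requirement becomes $\sigma(s)\,t_0\,s\inv\in Z(G)$. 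Thus everything comes down to showing that the class of $t_0$ in the Galois cohomology group attached to $H$ and $Z(G)$ is trivial; I would prove this by choosing $C_0$ compatibly with a pinning whose root vectors are permuted by $\sigma$ in accordance with its action on $R(G,H)$, and then verifying the vanishing orbit by orbit under $\sigma$ --- on the rank-one subgroup attached to a $\sigma$-orbit of roots the required adjustment exists precisely because there are no real roots. Equivalently, one is showing that $-1$, suitably twisted by the diagram automorphism $-w_0$, lies in the ``real Weyl group'' of $H$ inside $\Aut(G)$, which holds because $H$ is fundamental. This cohomological vanishing is the technical heart; once $C$ is produced, $C(h)=h\inv$ on $H$ and $C^2=1$ by construction.

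\emph{Rational Chevalley, and dualizing.} An involution as in the statement is an $\R$-rational automorphism of $G$, hence extends algebraically to $G(\C)$ where it is an involution inverting the maximal torus $H$; by the structure theory over the algebraically closed field $\C$ (any involution inverting a maximal torus is conjugate to the standard Chevalley involution), it is a Chevalley involution of $G(\C)$ that is defined over $\R$, i.e.\ a rational Chevalley involution. For the dualizing property I would induct on $\dim G$: if a semisimple $g\in G(\R)$ is central then $g\in H$ and $C(g)=g\inv$; otherwise $M:=Z_G(g)^\circ$ is a proper reductive $\R$-subgroup in which $g$ is central, and one shows $C$ may be conjugated by an element of $G(\R)$ so as to restrict to a rational Chevalley involution of $M$. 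The latter rests on a descent statement --- that $C(g)$ and $g\inv$, which are $G(\C)$-conjugate, have $G(\R)$-conjugate centralizers, the obstruction again being a Galois cohomology class controlled by fundamentality --- after which the inductive hypothesis for $M$ gives $C(g)\sim_{M(\R)}g\inv$, hence $C(g)\sim_{G(\R)}g\inv$.

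\emph{Uniqueness.} Given two involutions $C,C'$ as in the statement, each inverting a fundamental Cartan subgroup, conjugate by $G(\R)$ so that both invert the common torus $H$. Being Chevalley involutions over $\C$ they differ by an inner automorphism, and restriction to $H$ forces $C'=\Int(s)\circ C$ with $s\in H$; rationality of $C$ and $C'$ forces $\sigma(s)s\inv\in Z(G)$. Since $\Int(x)\circ C\circ\Int(x)\inv=\Int(x\,C(x)\inv)\circ C$, one must show $s\in\{x\,C(x)\inv:x\in G(\R)\}\cdot Z(G)$, equivalently that the cohomology group parametrising $\Int(G(\R))$-conjugacy classes of rational Chevalley involutions inverting $H$ is trivial --- which, as in the existence step, follows from the fact that $\sigma$ is fixed-point-free on $R(G,H)$, i.e.\ from $H$ being fundamental. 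I expect this cohomological vanishing, which recurs in disguised forms in the existence, dualizing, and uniqueness steps, to be the principal obstacle; the remainder is bookkeeping around the structure theory over $\C$ and the standard reductions to centralizers and to a common fundamental Cartan subgroup.
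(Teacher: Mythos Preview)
Your overall framing—cohomological adjustment of a generic Chevalley involution, and induction through centralizers for the dualizing property—is genuinely different from the paper's, which is constructive throughout. The paper fixes a pinning $\mathcal{P}=(H,B,\{X_\alpha\})$ and the associated compact conjugation $\sigma_c$, normalizes $\theta$ to the form $\theta=\int(h)\delta$ with $\delta$ distinguished and $h\in H^\delta$ (so $h^2\in Z(G)$), and then checks by a direct root-vector computation that $C_{\mathcal P}$ commutes with both $\sigma_c$ and $\theta$, hence with $\sigma=\theta\sigma_c$. No cohomological adjustment is needed: the well-chosen $C_0$ already works, so your $t_0$ is simply $1$. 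For dualizing, the paper uses Cayley transforms: starting from the fundamental Cartan, one passes through successive noncompact imaginary roots to reach every $G(\R)$-conjugacy class of Cartan subgroup, at each step conjugating by an explicit element of the split rank-one real subgroup $G_\alpha(\R)$. For uniqueness, the paper shows that any automorphism trivial on $H_f(\R)$ is $\int(h)$ for some $h\in H(\R)$, the decisive input being that $H_{ad}(\R)$ is \emph{connected} when $H$ is fundamental; then connectivity of $H(\R)$ lets one take a square root.

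Your dualizing argument has a genuine gap. To run induction through $M=Z_G(g)^\circ$ you need $C$, or a $G(\R)$-conjugate of $C$, to stabilize $M$; but $C(M)=Z_G(C(g))^\circ$, and arranging $C(g)\in\{g,g^{-1}\}$ by a $G(\R)$-conjugation is essentially the statement you are trying to prove. Your proposed descent—that $C(g)$ and $g^{-1}$ have $G(\R)$-conjugate centralizers—is not obviously easier, and even granting it, the restricted $C$ need not invert a \emph{fundamental} Cartan of $M$, which is what the inductive hypothesis requires. The Cayley-transform argument sidesteps all of this by working Cartan-by-Cartan rather than element-by-element. Your existence and uniqueness steps are more plausible, but the repeated ``cohomological vanishing because there are no real roots'' is asserted rather than proved; the concrete fact that actually drives it, and that you should isolate, is the connectivity of $H_{ad}(\R)$ (equivalently of $H(\R)$) for fundamental $H$—this is precisely where the paper invokes fundamentality.
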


If $G$ is semisimple and simply connected this is due to Vogan 
\cite[Chapter I, Section 7]{borelWallach}.
The proof of the Theorem  is similar to the proof in \cite{borelWallach}.
See Remark \ref{r:deduce}.

If $G$ is simple all involutions (over local and finite fields) have been
classified by Helminck.  In particular Theorem \ref{t:main} can be read off
from \cite{hel88}, and similar results over other fields follow from
\cite{helminck_classification}.

\begin{definition}
We refer to an involution of $G(\R)$ satisfying the conditions of the
Theorem as a {\it fundamental} Chevalley involution of $G(\R)$.
\end{definition}

Since all fundamental Chevalley involutions are conjugate by an inner
automorphism of $G(\R)$, we may safely refer to {\it the} fundamental
Chevalley involution.

\begin{corollary}
\label{c:dual}
Suppose $\pi$ is an irreducible 
representation of $G(\R)$, and $C$ is the fundamental Chevalley involution.
Then $\pi^C\simeq \pi^*$.  
\end{corollary}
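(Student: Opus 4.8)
The plan is to deduce this from the ``dualizing'' clause of Theorem \ref{t:main} together with Harish-Chandra's characterization of irreducible admissible representations by their distribution characters. Throughout, ``irreducible representation'' means irreducible admissible representation (equivalently, irreducible $(\g,K)$-module); both $\pi^C = \pi\circ C$ and $\pi^*$ are again irreducible admissible, since $C$ is an automorphism of $G(\R)$ and the contragredient of an irreducible admissible representation is irreducible admissible. Recall that an irreducible admissible representation is determined up to infinitesimal equivalence by its distribution character $\Theta_\pi$, and that $\Theta_\pi$ is a conjugation-invariant distribution on $G(\R)$ which is represented by a locally integrable function, real-analytic on the open dense set $G(\R)'$ of regular semisimple elements. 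So it is enough to show $\Theta_{\pi^C} = \Theta_{\pi^*}$ as distributions, and for that it is enough to show they agree on $G(\R)'$.

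Next I would record the two elementary character identities $\Theta_{\pi^C}(g) = \Theta_\pi(C(g))$ and $\Theta_{\pi^*}(g) = \Theta_\pi(g\inv)$; the first is immediate from the definition of $\pi^C$, and the second is the standard behavior of the contragredient under characters (using that $G(\R)$ is unimodular, so no modular factor intervenes). Now fix $g\in G(\R)'$. Since $g$ is in particular semisimple and $C$ is dualizing by Theorem \ref{t:main}, the element $C(g)$ is $G(\R)$-conjugate to $g\inv$; as $\Theta_\pi$ is invariant under $G(\R)$-conjugation, this gives $\Theta_\pi(C(g)) = \Theta_\pi(g\inv)$, i.e. $\Theta_{\pi^C}(g) = \Theta_{\pi^*}(g)$ for all $g\in G(\R)'$.

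Because $G(\R)'$ is dense in $G(\R)$ and both $\Theta_{\pi^C}$ and $\Theta_{\pi^*}$ are locally integrable functions (determined by their restriction to $G(\R)'$), we conclude $\Theta_{\pi^C} = \Theta_{\pi^*}$ as distributions, whence $\pi^C\simeq\pi^*$ by Harish-Chandra's theorem. I do not expect any genuine obstacle: the corollary is essentially a formal consequence of Theorem \ref{t:main} plus the theory of characters. The only point deserving care is that the needed identity $C(g)\sim g\inv$ must be invoked on the regular semisimple locus rather than on a single Cartan subgroup, and this is exactly what the dualizing property provides; one then needs the (standard) facts that $G(\R)'$ is dense and that invariant eigendistributions — in particular irreducible characters — are locally $L^1$ functions, so that coincidence on $G(\R)'$ forces equality everywhere.
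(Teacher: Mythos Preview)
Your argument is correct and is exactly the character-theoretic deduction the paper has in mind: the corollary is stated without proof as an immediate consequence of the dualizing property in Theorem \ref{t:main}, and the same reasoning (``standard facts about characters of representations, viewed as functions on the regular semisimple elements'') is invoked explicitly in the proof of Theorem \ref{t:every}. You have simply written out the details the paper leaves implicit.
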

\medskip

Over a p-adic field it is not always obvious, at least to this author,
that there is a rational Chevalley involution,
not to mention a dualizing one.
In any event, the dualizing condition is quite restrictive.
For example, if $G(F)$ is split, there are
are many $G(F)$-conjugacy classes of involutions $C$ such that
$C(h)=h\inv$ for $h$  in  a split Cartan subgroup.
Most of these are not dualizing.
In fact, if $G$ is a split
exceptional group of type $G_2,F_4$ or $E_8$ over a p-adic field there is
{\it no}  dualizing involution. See Example \ref{e:E8}. 

\medskip

The map $\pi\rightarrow\pi^*$ defines an involution on L-packets.  
The main result of \cite{contragredient} is that, 
on the dual side, this involution is given by the Chevalley involution
of  $\LG$. See Section \ref{s:every}. 
It follows that there is an elementary condition for every L-packet to
be self-dual.

\begin{proposition}
\label{p:lpacket}
Every $L$-packet for $G(\R)$ is self-dual if and only if $-1\in W(G(\C),H(\C))$.
\end{proposition}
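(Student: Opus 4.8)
\emph{Plan.} I would prove the two implications directly, using Corollary \ref{c:dual} together with infinitesimal characters; the dual group enters only to identify the Weyl-group condition, so the appeal to \cite{contragredient} can stay in the background.

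Suppose first that $-1\in W(G(\C),H(\C))$. Then the Chevalley involution of the connected group $G(\C)$ is inner: after composing with $\Int(n_{w_0})$ it becomes a pinning-preserving automorphism inducing $-w_0$ on the based root datum, which is trivial --- so the involution is inner --- exactly when $w_0=-1$. Hence, if $C$ is the fundamental Chevalley involution of $G(\R)$, the rational Chevalley involution of $G(\C)$ restricting to $C$ (Theorem \ref{t:main}) equals $\Int(g_0)$ for some $g_0$ in the adjoint group; since that involution is defined over $\R$, $g_0$ may be taken in the real points of the adjoint group. Conjugation by those real points permutes each $L$-packet of $G(\R)$ into itself, so $\pi^{C}$ lies in the $L$-packet of $\pi$; and $\pi^{C}\simeq\pi^{*}$ by Corollary \ref{c:dual}. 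Thus every $L$-packet is self-dual.

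Conversely, suppose $-1\notin W(G(\C),H(\C))$; I produce one non-self-dual $L$-packet, from the fundamental (limit of discrete) series. Let $P_f=M_fA_fN_f$ be a fundamental parabolic subgroup of $G(\R)$, so $M_f(\R)$ has discrete series, let $\h=\t_f\oplus\a_f$ be the associated fundamental Cartan subalgebra (with $\t_f$ a compact Cartan of $M_f$), and put $\pi=\Ind_{P_f}^{G(\R)}(\delta\otimes\nu)$ for $\delta$ a discrete series of $M_f(\R)$ and $\nu$ a character of $A_f(\R)$. The infinitesimal character of $\pi$ is the $W(G(\C),H(\C))$-orbit of $\Lambda=\lambda_\delta+\nu'$, where $\lambda_\delta\in\t_f^{*}$ is the Harish--Chandra parameter of $\delta$ --- filling up, as $\delta$ varies, a translate of a full lattice in $\t_f^{*}$ away from the $M_f$-singular hyperplanes --- and $\nu'\in\a_f^{*}$ is arbitrary. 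By Corollary \ref{c:dual} and the usual behaviour of infinitesimal characters under contragredient and parabolic induction, $\pi^{*}$ has infinitesimal character the orbit of $-\Lambda$; as the infinitesimal character is constant on an $L$-packet, $\Pi_\pi$ fails to be self-dual once $-\Lambda$ is not $W(G(\C),H(\C))$-conjugate to $\Lambda$. Such a $\Lambda$ exists: otherwise, letting $\nu'$ range over $\a_f^{*}$ would force, for each admissible $\lambda_\delta$, an element $w\in W(G(\C),H(\C))$ with $w|_{\a_f^{*}}=-1$ and $w\lambda_\delta=-\lambda_\delta$; no such $w$ can also act by $-1$ on $\t_f^{*}$ (that would give $-1\in W(G(\C),H(\C))$), so each $\ker(w+1)\cap\t_f^{*}$ is a proper subspace, and a translate of a full lattice of $\lambda_\delta$'s cannot lie in their finite union.

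I expect the necessity step to be the main obstacle, especially the need for the \emph{fundamental} series rather than something cheaper: minimal principal series do not suffice in general, since when the minimal Levi is nonabelian its characters span only a proper part of $\h^{*}$ (for $SU^{*}(4)$ they span a line), and groups with non-self-dual representations such as $SU^{*}(2n)$ may have no discrete series either. The other ingredients --- innerness of the Chevalley involution of $G(\C)$, stability of $L$-packets under conjugation by the real adjoint group, nonemptiness of the fundamental series for any real form, and the effect of contragredient and induction on infinitesimal characters --- are standard. Alternatively the necessity argument runs on the dual side: by \cite{contragredient}, $\pi\mapsto\pi^{*}$ corresponds to $\phi\mapsto {}^\vee C\circ\phi$ for the Chevalley involution ${}^\vee C$ of $\LG$, which is inner by $\hat G$ exactly when $-1\in W(\hat G,\hat T)=W(G(\C),H(\C))$.
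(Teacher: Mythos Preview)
Your argument is correct but takes a genuinely different route from the paper. The paper's proof works entirely on the dual side in one stroke: by \cite[Theorem~1.3]{contragredient}, the contragredient of the L-packet attached to $\phi:W_\R\to\LG$ is attached to $C\circ\phi$ for the Chevalley involution $C$ of $\LG$, so every L-packet is self-dual if and only if $C$ is inner for $\Ch G$, i.e.\ if and only if $-1\in W(\Ch G,\Ch H)\simeq W(G,H)$. Your forward direction instead stays on the group side, using that when $-1\in W$ the Chevalley involution of $G(\C)$ is inner and rational, hence realized by an element of $G_{ad}(\R)$, and that conjugation by $G_{ad}(\R)$ preserves L-packets; this last fact is standard but is an extra ingredient the paper's argument avoids. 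Your converse, via infinitesimal characters of fundamental series, is more elementary in that it uses only the effect of contragredient on infinitesimal characters rather than on full L-parameters; the cost is the lattice-avoidance step and the care (which you rightly flag) needed to see why the fundamental rather than minimal series must be used. You note at the end that the dual-side alternative via \cite{contragredient} is available---and that alternative \emph{is} the paper's proof.
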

Here $H(\C)$ is any Cartan subgroup of $G(\C)$, and 
$W(G(\C),H(\C))$ is the (absolute) Weyl group $\Norm_{G(\C)}(H(\C))/H(\C)$.

Now  consider the finer question,  whether every irreducible representation of $G(\R)$ is self-dual.
Let $H_f(\R)$ be a fundamental Cartan subgroup of $G(\R)$, 
and let $W(G(\R),H_f(\R))=\Norm_{G(\R)}(H_f(\R))/H_f(\R)$.
\begin{theorem}
\label{t:every}
Every irreducible representation of $G(\R)$ is self-dual if and only
if $-1\in W(G(\R),H_f(\R))$.  
\end{theorem}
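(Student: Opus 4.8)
The plan is to translate self-duality of representations into a conjugacy statement and then into a statement about real Weyl groups of Cartan subgroups. For any irreducible $\pi$, the contragredient satisfies $\Theta_{\pi^*}(g)=\Theta_\pi(g^{-1})$ on the regular semisimple set — equivalently, by Corollary \ref{c:dual} together with the fact that the fundamental Chevalley involution $C$ is dualizing, $\Theta_{\pi^C}(g)=\Theta_\pi(g^{-1})$. Since $\Theta_\pi$ is conjugation-invariant and is determined by its restriction to the (dense) regular semisimple set, I would first record that if every regular semisimple element of $G(\R)$ is $G(\R)$-conjugate to its inverse, then every irreducible representation of $G(\R)$ is self-dual. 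Next, for $g$ regular semisimple in a Cartan subgroup $H'(\R)$ one has $Z_{G(\R)}(g)=H'(\R)$, so $g$ is conjugate to $g^{-1}$ iff some $w\in W(G(\R),H'(\R))$ sends $g$ to $g^{-1}$; letting $g$ range over the regular points of $H'(\R)$, which are Zariski-dense in $H'(\C)$, and using that for fixed $w$ the set $\{g\in H'(\C):w\cdot g=g^{-1}\}$ is a closed subgroup — proper unless $w$ acts as $-1$, and a complex torus is not a finite union of proper closed subgroups — this holds for all regular semisimple $g\in H'(\R)$ iff $-1\in W(G(\R),H'(\R))$.

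The direction ``only if'' of the theorem is then quick. If every irreducible representation of $G(\R)$ is self-dual, then in particular, for a character $\chi$ of $H_f(\R)$ in general position the fundamental series representation $\pi(\chi)$ is self-dual; since $\pi(\chi)^*\simeq\pi(\chi^{-1})$ and $\pi(\chi)\simeq\pi(\chi')$ forces $\chi'$ to be $W(G(\R),H_f(\R))$-conjugate to $\chi$, we get $w_\chi\chi=\chi^{-1}$ for some $w_\chi\in W(G(\R),H_f(\R))$, and the density argument above (now applied to the characters of $H_f(\R)$) yields $-1\in W(G(\R),H_f(\R))$.

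The direction ``if'' is the substantial one. Assume $-1\in W(G(\R),H_f(\R))$. By the first paragraph it suffices to prove that $-1\in W(G(\R),H'(\R))$ for \emph{every} Cartan subgroup $H'(\R)$ of $G(\R)$, and this is the step I expect to be the main obstacle. I would approach it by connecting $H_f(\R)$ to an arbitrary $H'(\R)$ through a chain of Cayley transforms and showing that membership of $-1$ in the real Weyl group is preserved at each step — comparing $W(G(\R),H'(\R))$, through its action on the real and imaginary roots, with the Weyl group of the rank-one subgroup (an $\mathrm{SL}(2,\R)$, $\mathrm{PGL}(2,\R)$, or $\mathrm{SL}(2,\C)$) along which the Cayley transform is taken. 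Alternatively, and closer to the rest of the paper, one may refine Proposition \ref{p:lpacket}: by the main theorem of \cite{contragredient} the involution $\pi\mapsto\pi^*$ is realized on $L$-parameters by the Chevalley involution of $\LG$, which preserves a parameter's conjugacy class exactly when $-1\in W(G(\C),H(\C))$; within a self-dual $L$-packet this involution twists the characters of the component group parametrizing the packet by an explicit character, and one must check that this twist is trivial for all self-dual parameters precisely when $-1\in W(G(\R),H_f(\R))$. Either route reduces the theorem to understanding how the real Weyl group of the fundamental Cartan governs those of all Cartan subgroups (equivalently, to the component-group bookkeeping on the dual side); everything else is the soft argument of the first two paragraphs.
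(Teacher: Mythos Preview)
Your overall framework---reducing self-duality to the conjugacy condition ``every regular semisimple element of $G(\R)$ is $G(\R)$-conjugate to its inverse'' via characters---matches the paper exactly. Where you diverge is in how much work you think remains after that reduction.

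For the ``only if'' direction you invoke fundamental series and a density argument on characters of $H_f(\R)$. The paper does something much more elementary: once the conjugacy condition holds, pick any regular $h\in H_f(\R)$; then $h^{-1}=xhx^{-1}$ for some $x\in G(\R)$, and regularity forces $x\in\Norm_{G(\R)}(H_f(\R))$, so $-1\in W(G(\R),H_f(\R))$. Your argument is correct but unnecessarily heavy.

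For the ``if'' direction you call it ``the substantial one'' and propose either a Cayley-transform induction showing $-1\in W(G(\R),H'(\R))$ for every Cartan $H'(\R)$, or a dual-side component-group analysis. Here you are missing the shortcut that the paper exploits: Theorem~\ref{t:main} has already done this work. If $-1\in W(G(\R),H_f(\R))$ then there is $g\in\Norm_{G(\R)}(H_f(\R))$ with $\int(g)|_{H_f(\R)}=-1$; by Theorem~\ref{t:main} the inner automorphism $\tau=\int(g)$ is a fundamental Chevalley involution and hence \emph{dualizing}, so $\tau(x)$ is $G(\R)$-conjugate to $x^{-1}$ for every semisimple $x$. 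Since $\tau$ is inner, $\tau(x)$ is also conjugate to $x$, and the conjugacy condition follows in one line. Your Cayley-transform route would succeed---indeed it is essentially the proof of the dualizing lemma inside Theorem~\ref{t:main}---but you are redoing that argument rather than citing it.
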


The condition is equivalent to: every semisimple element of $G(\R)$
is $G(\R)$-conjugate to its inverse.
We  give some information about when this condition holds 
in Section \ref{s:every}.
For example, suppose $G(\R)$ is connected,
$H_f(\R)$ is compact, and let $K(\C)$ be the complexification of a maximal compact subgroup of $G(\R)$. 
Then  $W(G(\R),H_f(\R))$ is the Weyl group of the root system of the connected, reductive group 
$K(\C)$. One can then look up this root system in a table, for example 
\cite{ov}, and check if it contains $-1$.

\begin{corollary}
\label{c:every}
Every irreducible  representation of $G(\R)$ is self-dual 
if and only if both of these  conditions hold:
\begin{enumerate}
\item[(a)] every irreducible representation of $K(\R)$ is self
dual,
\item[(b)]  $-1$ is in the absolute Weyl group $W(G(\C),H(\C))$. 
\end{enumerate}
If $G(\R)$ contains a compact Cartan subgroup then (a) implies (b).
\end{corollary}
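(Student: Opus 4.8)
The plan is to deduce this corollary from Theorem \ref{t:every}, which reduces the problem to checking whether $-1 \in W(G(\R), H_f(\R))$, together with the structural description of $W(G(\R), H_f(\R))$ mentioned just before the corollary: when $G(\R)$ is connected and $H_f(\R)$ is compact, this real Weyl group is the Weyl group of the root system of $K(\C)$. First I would recall that every irreducible representation of $K(\R)$ (a compact group, up to connected components and the relation to $K(\C)$) is self-dual if and only if $-1$ lies in the relevant Weyl group $W(K(\C), T(\C))$ for a maximal torus $T(\C)$ of $K(\C)$ — this is the standard compact-group criterion, and it matches the criterion in Theorem \ref{t:every} applied to $K$ in place of $G$. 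So condition (a) is equivalent to $-1 \in W(G(\R), H_f(\R))$ in the compact-Cartan case, and more generally I must show that (a) and (b) together are equivalent to $-1 \in W(G(\R), H_f(\R))$ in general.

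The key algebraic step is a comparison of three Weyl groups: the absolute Weyl group $W(G(\C), H(\C))$, the real Weyl group $W(G(\R), H_f(\R))$, and the "small" Weyl group coming from $K$. I would use the fact that $W(G(\R), H_f(\R))$ sits inside $W(G(\C), H_f(\C))$ as the subgroup fixed (in an appropriate twisted sense) by the action coming from the real form, and that its "compact part" — the part acting through $K$ — is exactly the Weyl group appearing in condition (a). The element $-1$ of $W(G(\C), H(\C))$, when it exists, acts on $H_f(\C)$; I would check that $-1 \in W(G(\R), H_f(\R))$ precisely when $-1$ already lies in the absolute Weyl group (that is, (b) holds) \emph{and} the witnessing element can be chosen to normalize the real structure, which after a reduction to the maximal compact side is exactly condition (a). Concretely, writing $H_f(\R) = T(\R) A(\R)$ with $T$ compact and $A$ split, $-1$ acts as inversion on both factors; inversion on the compact part $T$ is realized inside $G(\R)$ iff it is realized inside $K(\R)$ (condition (a) via the compact criterion), while inversion on $A$ together with the overall statement that $-1$ is a Weyl group element at all is governed by (b).

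For the final assertion — if $G(\R)$ has a compact Cartan subgroup then (a) implies (b) — I would argue that when $H_f(\R)$ is compact, $A(\R)$ is trivial and $H_f(\C) \subset K(\C)$, so the root system of $K(\C)$ contains a maximal torus of $G(\C)$; hence $W(K(\C), H_f(\C)) \subseteq W(G(\C), H_f(\C)) = W(G(\C), H(\C))$, and if $-1$ lies in the former (which is condition (a) by the compact criterion) it automatically lies in the latter, which is condition (b). The main obstacle I anticipate is making the comparison between $W(G(\R), H_f(\R))$ and the $K(\C)$-Weyl group precise when $G(\R)$ is \emph{not} connected and when $H_f(\R)$ is \emph{not} compact — handling the component group of $K(\R)$ correctly in the self-duality criterion for $K(\R)$, and correctly separating the compact and split directions of the fundamental Cartan when invoking Theorem \ref{t:every}. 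I would isolate that bookkeeping into a short lemma on the structure of $W(G(\R), H_f(\R))$ and its image in $\Aut(H_f(\R))$, so that the logical equivalence ``(a) and (b) $\iff$ $-1 \in W(G(\R), H_f(\R))$'' becomes a formal consequence.
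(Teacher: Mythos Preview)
Your overall plan---reduce to Theorem~\ref{t:every} and then compare the three Weyl groups $W(G(\R),H_f(\R))\simeq W(K,H_f)$, $W(K,H_K)$, and $W(G,H_f)$---is the same plan the paper follows. Two substantive steps are missing from your sketch, and in one of them your proposed route differs from the paper's in a way that creates a real gap.

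\medskip

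\textbf{Forward direction.} You want to show that $-1\in W(G(\R),H_f(\R))$ forces condition~(a), by invoking a ``standard compact-group criterion'' that every irreducible representation of $K(\R)$ is self-dual iff $-1\in W(K,H_K)$. That criterion is standard only for \emph{connected} $K$; for the possibly disconnected $K=G^\theta$ it is exactly the issue you flag but do not resolve. The paper sidesteps this entirely: it proves (a) not from Weyl groups at all, but from the representation-theoretic fact that every irreducible representation $\mu$ of $K(\R)$ occurs as the \emph{unique} lowest $K(\R)$-type of some irreducible $\pi$ of $G(\R)$, and the lowest $K(\R)$-type of $\pi^*$ is $\mu^*$. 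Without this lowest-$K$-type input (or a genuine proof of your criterion for disconnected $K$), your forward direction has a hole.

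\medskip

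\textbf{Backward direction.} Your $H_f(\R)=T(\R)A(\R)$ heuristic is headed in the right direction but does not isolate the key step. What the paper actually does: from (a) one extracts (via an induction argument handling the disconnectedness of $K$) an element $k\in K$ inverting $H_K$; from (b) one has $g\in G$ inverting $H_f$. Then $gk^{-1}$ centralizes $H_K$, and since $\Cent_G(H_K)=H_f$ (this is the definition of the fundamental Cartan as the centralizer of a Cartan of $K$), one gets $k\in H_f\cdot g$, so $k$ itself inverts all of $H_f$. That one-line centralizer argument is what glues the ``compact'' and ``split'' pieces together; your sketch does not supply it.

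\medskip

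Your argument for the final assertion (equal rank $\Rightarrow$ (a) implies (b)) is correct and matches the paper's, modulo the same caveat that extracting $-1\in W(K,H_K)$ from (a) for disconnected $K$ needs the induction argument rather than an appeal to a ``standard'' criterion.
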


It is perhaps surprising how common this is. For example:

\begin{theorem}
\label{t:everyadjoint}
If $-1\in W(G(\C),H(\C))$, and $G$ is of adjoint type, 
then every irreducible representation of $G(\R)$ is self dual.
\end{theorem}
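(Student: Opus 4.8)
The plan is to reduce the claim to Theorem \ref{t:every} and then to verify the criterion $-1\in W(G(\R),H_f(\R))$ directly from the hypothesis $-1\in W(G(\C),H(\C))$, using the special structure of adjoint groups. By Theorem \ref{t:every} it suffices to show that every semisimple element of $G(\R)$ is $G(\R)$-conjugate to its inverse, equivalently that $-1$ lies in the \emph{real} Weyl group $W(G(\R),H_f(\R))$ of a fundamental Cartan subgroup. So the whole content is: for $G$ adjoint, the presence of $-1$ in the absolute Weyl group forces its presence in the real Weyl group of $H_f(\R)$. I would first reduce to the case $G$ simple, since an adjoint group is a direct product of simple adjoint groups, the absolute Weyl group and the fundamental Cartan split as products, and $-1$ is in a product of Weyl groups iff it is in each factor; a real form of a product is a product of real forms of the (possibly permuted, but for $-1$ this is harmless) factors.

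Next I would set up the structure of $W(G(\R),H_f(\R))$ for a fundamental Cartan subgroup. Write $H_f = TA$ with $T$ the compact part and $A$ the (small) split part; since $H_f$ is fundamental, $A$ is as small as possible. The real Weyl group $W(G(\R),H_f(\R))$ is the subgroup of the absolute Weyl group $W(G(\C),H(\C))$ fixed (as a set of elements, via the appropriate action) by the Cartan involution $\theta$ attached to $H_f$ — more precisely it is generated by the real and complex (imaginary-pair) reflections together with the ``$R$-group''-type contributions, and it coincides with the Weyl group of the fixed group $G^\theta$ acting on $\t$ together with the action on $\a$. The key point for adjoint $G$ is that $-1\in W$ implies that $\theta$, as an automorphism of the based root datum, is inner (this is where adjointness enters decisively: for adjoint groups an automorphism acting trivially on the root datum modulo inner ones, combined with $w_0 = -1$, pins things down), so that $\theta$ acts on $H(\C)$ as an element of $W(G(\C),H(\C))$, and hence $-1\cdot\theta\inv$ — equivalently $-\theta$ — is realized inside $W(G(\R),H_f(\R))$; combined with the fact that for a fundamental Cartan the map $w\mapsto w$ restricted to $\a$ and the imaginary roots is surjective onto what one needs, one gets $-1$ itself in the real Weyl group.

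A cleaner way to organize the last step, which I would actually carry out: use Corollary \ref{c:every}. Condition (b) there is exactly the hypothesis. So it remains to check condition (a): every irreducible representation of $K(\R)$ is self-dual, equivalently $-1$ is in the Weyl group of $K(\C)$ (acting on a compact Cartan of $K$, when $G(\R)$ has a compact Cartan subgroup), or the analogous statement in general. For $G$ adjoint, $K$ is connected (a maximal compact subgroup of a connected adjoint real group is connected — or at least $\pi_0(K)$ causes no trouble here), and one reads off from the classification of symmetric subgroups that the root system of $K(\C)$, equipped with the given Cartan, contains $-1$ in its Weyl group whenever $-1$ is in the Weyl group of $G(\C)$. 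The efficient route is a case-by-case check over the simple adjoint groups $G$ with $-1\in W(G(\C),H(\C))$: these are $A_1$, $B_n$, $C_n$, $D_{2n}$, $G_2$, $F_4$, $E_7$, $E_8$; for each, run through the real forms, identify $K(\C)$ via the Vogan/Satake diagram, and confirm $-1\in W(K(\C))$. The main obstacle is precisely this last verification — it is not a single slick argument but a finite table-check, and the delicate cases are the ones where $K$ has factors of type $A_n$ with $n\ge 2$ (whose Weyl group $S_{n+1}$ does \emph{not} contain $-1$): one must check that such factors never actually occur for adjoint $G$ with $-1$ in the absolute Weyl group, or that when they do, a diagram automorphism of $K$ compensates. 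I expect they are ruled out exactly because $-1\in W(G(\C))$ rigidifies the possible Satake diagrams. I would present this as a lemma isolating the statement ``$-1\in W(G(\C))$ and $G$ adjoint $\Rightarrow$ $-1\in W(K(\C))$ for every real form,'' prove it by the case analysis, and then invoke Corollary \ref{c:every}.
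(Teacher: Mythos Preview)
Your approach is viable but takes a different and more laborious route than the paper. The paper deduces the theorem from Corollary~\ref{c:practical}, not Corollary~\ref{c:every}. Corollary~\ref{c:practical} says (for $G(\R)$ simple) that every irreducible representation is self-dual iff $-1\in W(G,H_f)$ and, in the equal rank case, the real form is \emph{pure}: writing $\theta=\int(x)$, one has $x^2=1$. The whole proof of Theorem~\ref{t:everyadjoint} is then one line: for $G$ adjoint, $Z(G)=1$, so the condition $x^2\in Z(G)$ forces $x^2=1$; hence every real form is pure and the criterion is satisfied. The unequal rank case is already absorbed into the proof of Corollary~\ref{c:practical} (reducing to $D_{2n}$ and ultimately to $SL(2,\C)$). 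This exploits adjointness directly through the triviality of the center, rather than through any statement about $K$.

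Your route via Corollary~\ref{c:every} can be made to work, but be warned that your assertion ``for $G$ adjoint, $K$ is connected'' is false and this matters. For $G(\R)=PGL(2,\R)$ one has $K\simeq O(2,\C)/\{\pm I\}$, which is disconnected; $W(K_0,H_K)$ is trivial, and only the extra component of $K$ supplies $-1\in W(K,H_K)$. So the verification of condition~(a) cannot be done by reading off the root system of $K_0$ from a table: your hedge ``a diagram automorphism of $K$ compensates'' is exactly what happens, but it means you must track $\pi_0(K)$ through the case-by-case check. The paper's purity argument sidesteps all of this.
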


For a more precise version, and some examples, see Section
\ref{s:every}, especially  Corollary \ref{c:practical}.

\medskip

We next give an application to Frobenius Schur indicators. If $\pi$ is an irreducible,
self-dual representation of $G(\R)$, the 
Frobenius Schur indicator $\epsilon(\pi)$ of $\pi$ is 
$\pm1$, depending on whether $\pi$ admits an invariant symmetric, or
skew-symmetric, bilinear form.
Write $\chi_\pi$ for the central character of $\pi$.
Let $\Ch\rho$ be one-half the sum of any set of positive co-roots.
Then $z(\Ch\rho)=\exp(2\pi i\Ch\rho)$ is in the center of $G(\R)$.

The Frobenius Schur indicator of a finite dimensional representation $\pi$
of $G(\C)$ is $\chi_\pi(z(\Ch\rho))$.  Under an assumption the same
holds for all irreducible (possibly infinite dimensional)
representations of $G(\R)$.

\begin{theorem}
Suppose every irreducible representation of $G(\R)$ is self-dual. 
Then, for any irreducible representation $\pi$,
$\epsilon(\pi)=\chi_\pi(z(\Ch\rho))$.

In particular the assumption holds if $-1\in W(G(\C),H(\C))$ and $G$ is of adjoint type (Theorem \ref{t:everyadjoint}), 
in which case every irreducible representation is orthogonal.
\end{theorem}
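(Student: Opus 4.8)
The ``in particular'' clause drops out of the main assertion: if $-1\in W(G(\C),H(\C))$ and $G$ is of adjoint type, then Theorem~\ref{t:everyadjoint} guarantees that every irreducible representation of $G(\R)$ is self-dual, so the hypothesis of the main assertion holds; and for $G$ adjoint $\Ch\rho$ is a cocharacter of a maximal torus, so $z(\Ch\rho)=\exp(2\pi i\Ch\rho)=1$ and hence $\epsilon(\pi)=\chi_\pi(1)=1$ for every $\pi$, i.e.\ every irreducible representation is orthogonal. So it remains to prove $\epsilon(\pi)=\chi_\pi(z(\Ch\rho))$ under the standing hypothesis. The plan is to route this through the fundamental Chevalley involution $C$. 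First note that the hypothesis forces $-1$ into the \emph{absolute} Weyl group, since $-1\in W(G(\R),H_f(\R))\subseteq W(G(\C),H(\C))$ by Theorem~\ref{t:every}. As $C$ restricts to inversion on a fundamental Cartan, $C$ and $\Int(\dot w_0)$ agree there for $\dot w_0$ a representative of $w_0=-1$, so $C\circ\Int(\dot w_0)\inv$ fixes that Cartan pointwise and is therefore inner by an element of the complexified Cartan; hence $C=\Int(g_0)$ is inner over $\C$, and taking $\dot w_0=\sigma_{w_0}$ a Tits representative one computes
\[
g_0^2=\sigma_{w_0}^{\,2}=\exp\!\Big(\pi i\sum_{\alpha>0}\Ch\alpha\Big)=\exp(2\pi i\Ch\rho)=z(\Ch\rho),
\]
independently of the torus ambiguity in $g_0$. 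Here $g_0$ need not lie in $G(\R)$, but it normalises $G(\R)$ and $z(\Ch\rho)\in Z(G(\R))$. This structural identity is the geometric heart of the argument.

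Now fix an irreducible $\pi$ on $V$. By Corollary~\ref{c:dual}, $\pi^C\simeq\pi^*$, and $\pi^*\simeq\pi$ by hypothesis; since $C=\Int(g_0)$ this says $\pi^C\simeq\pi$, so $\pi$ extends to a representation $\wt\pi$ of $\wt G:=\langle G(\R),g_0\rangle$, normalised so that $A:=\wt\pi(g_0)$ satisfies $A^2=\pi(g_0^2)=\chi_\pi(z(\Ch\rho))\cdot\mathrm{Id}$. The isomorphism $\pi^C\simeq\pi^*$ gives a nondegenerate bilinear form $B$ on $V$ with $B(\pi(C(g))v,\pi(g)w)=B(v,w)$; unwinding the $C$-twist, $B$ pairs each weight space of $H_f(\R)$ with itself, so $B':=B(A\,\cdot\,,\,\cdot\,)$ is an honest $G(\R)$-invariant form on $\pi$ and $\epsilon(\pi)$ is its symmetry sign. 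A short manipulation with transposes relative to $B$ (using that the $B$-adjoint of $\pi(g)$ is $\pi(C(g\inv))$, and Schur's lemma) yields $\epsilon(\pi)=\eta\,c'\,\chi_\pi(z(\Ch\rho))$, where $\eta\in\{\pm1\}$ is the symmetry sign of the $C$-twisted form $B$ and $c'\in\{\pm1\}$ is defined by $g_0^{*}B'=c'B'$ (that $(c')^2=1$ comes from $g_0^2=z(\Ch\rho)$ being central). Thus the desired identity is equivalent to $\eta\,c'=1$; concretely it suffices to establish (i) the $C$-twisted form $B$ is symmetric ($\eta=1$), and (ii) the canonical extension $\wt\pi$ preserves $B'$, i.e.\ is self-dual as a representation of $\wt G$ ($c'=1$) --- and only the \emph{product} of the two signs really needs to be pinned down.

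For (i) I would reduce, via the $B$-invariance, to the highest $H_f(\R)$-weight line and the rank-one subgroups, where it is the classical statement for the algebraic Chevalley involution. Statement (ii) is the substantive one, and the plan for it is to reduce by the Langlands classification and parabolic induction to the case of (limits of) discrete series of $G(\R)$ and of its Levi subgroups --- using that, when every representation in sight is self-dual, the Frobenius--Schur indicator and the central character are compatible with passage to Langlands quotients and with unitary parabolic induction --- and then to settle the base case directly: for a (limit of) discrete series one has an explicit enough realisation, or one passes to a suitable multiplicity-one lowest $K$-type and invokes the classical Frobenius--Schur computation for the compact group $K(\R)$, the point being that the self-duality hypothesis (through Theorem~\ref{t:every}) is exactly what forces the half-sum of positive coroots occurring there to represent the central element $z(\Ch\rho)$. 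The hard part is this last step: proving the inductive compatibility of the indicator, carrying out the base-case computation, and doing the attendant bookkeeping --- in particular controlling the discrepancy between half the sum of the positive coroots of $G$ and that of the relevant Levi or maximal compact subgroup, and dealing with possible disconnectedness of $G(\R)$. Granting (i) and (ii), $\epsilon(\pi)=\chi_\pi(z(\Ch\rho))$, and the ``in particular'' clause follows as above, with $z(\Ch\rho)=1$ for $G$ adjoint forcing $\epsilon\equiv1$.
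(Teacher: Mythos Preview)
Your proposal is a plan, not a proof: the two decisive assertions --- that the $C$-twisted form $B$ is symmetric (your $(i)$), and that the extension $\wt\pi$ is self-dual (your $(ii)$) --- are left as programs (``I would reduce\ldots'', ``the plan for it is\ldots''), and you yourself flag the hard parts (compatibility of $\epsilon$ with Langlands quotients and parabolic induction, control of $\Ch\rho_G$ versus $\Ch\rho_K$, disconnectedness of $K$). None of these is carried out. In particular, your attempt at $(i)$ by ``reducing to the highest $H_f(\R)$-weight line'' does not make sense for an infinite-dimensional $(\g,K)$-module, which has no highest weight line; and the compatibility of the Frobenius--Schur sign with passage to Langlands quotients is not automatic (the invariant form on the standard module need not have radical equal to the maximal proper submodule).

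More to the point, your detour through the $\eta,c'$ bookkeeping and the Langlands classification is unnecessary: the paper's argument goes \emph{directly} to the step you call the ``base case.'' Since every $K$-type is self-dual (Corollary~\ref{c:every}), a lowest $K$-type $\mu$ of $\pi$ is self-dual and has multiplicity one, so $\epsilon(\pi)=\epsilon(\mu)$. One then restricts $\mu$ to $K_0$, shows this restriction is multiplicity free (Lemma~\ref{l:multfree}, which handles the disconnectedness you worried about), and applies the classical finite-dimensional formula $\epsilon(\mu_0)=\chi_{\mu_0}(z(\Ch\rho_K))$ to an irreducible summand $\mu_0$ (with a small twist, Lemma~\ref{l:schurfinite2}, when $\mu_0$ itself is not self-dual). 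The genuinely nontrivial ingredient --- the one you correctly identify but do not prove --- is the identity $z(\Ch\rho_G)=z(\Ch\rho_K)$ whenever $-1\in W(K,H_f)$; this is Lemma~\ref{l:zrhoK}, established via the Kac classification of real forms. Once that is in hand, $\chi_{\mu_0}(z(\Ch\rho_K))=\chi_\pi(z(\Ch\rho_G))$ because $z(\Ch\rho_G)$ is central in $G(\R)$, and the result follows without ever touching parabolic induction. Your observation that $g_0^2=z(\Ch\rho)$ is exactly Lemma~\ref{l:w0}, and the paper uses it too, but applied inside $K$ rather than through an extension $\wt G$.
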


This paper is a complement to \cite{contragredient}, which considers
the action of the Chevalley involution on the dual group, and
its relation to the contragredient. See \cite[Remark 7.5]{contragredient}.

The author would like to thank Dipendra Prasad, Dinakar Ramakrishnan
and George Lusztig for very useful discussions. He also thanks the
referees for a number of helpful suggestions which have improved the
paper.

\section{Split Groups}
\label{s:split}

Throughout this paper
$G$ denotes a connected, reductive algebraic group, defined over a field $F$. 
We may identify $G$ with its points $G(\Fbar)$  over an algebraic closure of $F$.
In this section $F$ is arbitary; starting in the next section $F=\R$.
For background on algebraic groups see \cite{springer_book}, \cite{borel_algebraic_groups} 
or \cite{humphreys_algebraic_groups}.

We start by defining Chevalley involutions. This is well known,
although it isn't easy to find it stated in the terms we need. See
\cite[Section 2]{contragredient}.

By a {\it Chevalley involution} of $G=G(\Fbar)$ we mean an involution $C$
of $G$
satisfying $C(h)=h\inv$ for all $h$ in some Cartan subgroup $H$. 
Any two such involutions are conjugate by an inner automorphism.

Fix a pinning $\caP=(H,B,\{X_\alpha\})$:
$H\subset B$ are Cartan and Borel subgroups of $G$, respectively,
and (for $\alpha$ a simple root) $X_\alpha$ is in the $\alpha$-weight
space of $\Lie(H)$ acting on $\Lie(G)$.
Pinnings always exist, and are unique up to an inner automorphism; 
an inner automorphism fixes a pinning only if it is trivial.
For $\alpha$ a simple 
root let $X_{-\alpha}$ be the unique $-\alpha$-weight
vector satisfying $[X_\alpha,X_{-\alpha}]=\Ch\alpha\in\Lie(H)$. 

The choice of $\caP$ determines  a unique Chevalley involution $C$, 
satisfying $C(h)=h\inv$ ($h\in H$) and $C(X_\alpha)=X_{-\alpha}$ ($\alpha$ simple). 

\medskip

Now suppose $G$ is semisimple and simply connected, and $G(F)$ is split.
Generators and relations for $G(F)$   are given by 
\cite[Th\'eor\`em 3.2]{steinberg} (see \cite{steinbergCollected}).
The generators are $x_\alpha(u)$ for $\alpha$ a root, and $u\in F$, and these 
satisfy certain relations.
It is easy to check that the map $C(x_\alpha(u))=x_{-\alpha}(u)$ preserves
the defining relations of $G(F)$, and the resulting automorphism
satisfies $C(h)=h\inv$ for $h$ in a split Cartan subgroup.
 
\begin{lemma}
\label{l:split}
Suppose $G$ is semisimple and simply connected, and $G(F)$ is split.
Let $H(F)$ be a split Cartan subgroup. Then
there is a rational Chevalley involution
satisfying $C(h)=h\inv$ for all $h\in H(F)$.
\end{lemma}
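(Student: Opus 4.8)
The plan is to verify directly, via Steinberg's presentation, that the map indicated just before the statement does everything required. Since $G(F)$ is split, the given split Cartan subgroup $H(F)$ lies in some Borel subgroup $B$ defined over $F$, and each root space $\g_\gamma$ is then defined over $F$; so I would choose a pinning $\caP=(H,B,\{X_\alpha\})$ over $F$ and extend $\{X_\alpha,X_{-\alpha}\}$ to a Chevalley system with root vectors $X_\gamma\in\g_\gamma(F)$. The associated one-parameter subgroups $x_\gamma\colon\G_a\to G$, $x_\gamma(u)=\exp(uX_\gamma)$, are then defined over $F$, and — because $G$ is semisimple and simply connected — the generators $x_\gamma(u)$ ($\gamma$ a root, $u\in F$) together with the Steinberg relations present $G(F)$, by \cite[Th\'eor\`eme~3.2]{steinberg}. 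I would then define the candidate $C$ on generators by $C(x_\gamma(u))=x_{-\gamma}(u)$ (inserting the usual signs $x_\gamma(u)\mapsto x_{-\gamma}(\epsilon_\gamma u)$, with $\epsilon_\gamma\epsilon_{-\gamma}=1$, if the Chevalley system at hand is not normalised so as to make them disappear).

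The crux is to check that $C$ respects each Steinberg relation, so that it extends to an endomorphism of $G(F)$. Additivity, $x_\gamma(u)x_\gamma(v)=x_\gamma(u+v)$, is immediate. The relation needing attention is the Chevalley commutator formula $[x_\alpha(t),x_\beta(u)]=\prod_{i,j>0}x_{i\alpha+j\beta}(c_{ij}t^iu^j)$: applying $C$ turns it into the analogous identity for the pair $(-\alpha,-\beta)$, and the two agree once one accounts for how the structure constants of a Chevalley system behave under $\gamma\mapsto-\gamma$, together with the signs $\epsilon_\gamma$. This sign bookkeeping in the commutator relation is the step I expect to be the main, though entirely routine, obstacle; the remaining relations — expressing the torus structure and the action of the Weyl-group representatives $w_\gamma(u)=x_\gamma(u)x_{-\gamma}(-u\inv)x_\gamma(u)$ — are handled in the same fashion. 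Since $C$ carries the generating set bijectively onto itself and $C^2$ is the identity on generators, the resulting endomorphism is an involutive automorphism of $G(F)$.

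It then remains to pin down $C$ on $H(F)$ and to recognise it as a rational Chevalley involution. A direct computation gives $C(w_\gamma(u))=w_{-\gamma}(u)$, hence $C(h_\gamma(u))=h_{-\gamma}(u)=h_\gamma(u)\inv$ for $h_\gamma(u)=w_\gamma(u)w_\gamma(1)\inv$; since $G$ is simply connected the $h_\gamma(u)$ generate $H(F)$, so $C(h)=h\inv$ for all $h\in H(F)$. Next, the identical recipe over $\Fbar$ defines an involutive automorphism $\wt C$ of $G(\Fbar)$; because each $x_\gamma$ is defined over $F$, for $\sigma\in\mathrm{Gal}(\Fbar/F)$ the automorphism $\sigma\wt C\sigma\inv$ agrees with $\wt C$ on the generators $x_\gamma(u)$, hence everywhere, so $\wt C$ is defined over $F$ and restricts to $C$ on $G(F)$. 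Finally the same torus computation, now over $\Fbar$, shows $\wt C$ inverts $H(\Fbar)$, so $\wt C$ is a Chevalley involution of $G(\Fbar)$ in the sense recalled above — in fact the one attached to $\caP$ — and therefore $C$ is the restriction of a rational Chevalley involution, as required. (Alternatively, one can bypass Steinberg's presentation altogether: an $F$-rational pinning $\caP$ exists because $G(F)$ is split, the Chevalley involution $C_\caP$ of $G(\Fbar)$ determined by $\caP$ is defined over $F$ by the uniqueness statement recalled above applied to its Galois conjugates $\sigma C_\caP\sigma\inv$, and its restriction to $G(F)$ is the sought involution.)
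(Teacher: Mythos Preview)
Your proposal is correct and follows essentially the same approach as the paper: the paper's proof (given in the paragraph immediately preceding the lemma) simply invokes Steinberg's presentation of $G(F)$, defines $C(x_\alpha(u))=x_{-\alpha}(u)$, and asserts that it is easy to check this preserves the defining relations and inverts the split Cartan. You have fleshed out exactly those details --- the commutator-relation sign bookkeeping, the computation $C(h_\gamma(u))=h_\gamma(u)^{-1}$, and the Galois argument for rationality --- and even supplied the clean alternative via an $F$-rational pinning, but the underlying idea is identical.
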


\begin{remark}
The same result holds {\it a fortiori} for the (possibly) nonlinear covering group $\Delta$ of $G(F)$ of
\cite[Th\'eor\`em 3.1]{steinberg}, which is obtained by dropping some relations from 
those for $G(F)$. 
\end{remark}

This is a somewhat weak result.  Not every rational Chevalley
involution is dualizing, and not all dualizing Chevalley involutions are
conjugate by an inner automorphism of $G(F)$.
Both these facts are  illustrated by a simple example.
For $g\in G$, let $int(g)$ be conjugation by $g$. 

\begin{example}
\label{e:notunique}
Let $G(F)=SL(2,F)$.
Let $H_s(F)$ be the diagonal (split) Cartan subgroup.
Let $\sigma=\begin{pmatrix}0&1\\-1&0\end{pmatrix}$, 
and let $C=\int(\sigma)$.
Then $C(g)=\hskip-6pt\phantom{a}^tg\inv$ for all $g$, and in particular
$C(g)=g\inv$ for all $g\in H_s(F)$.

Suppose $g\ne\pm I$ is contained in an anisotropic Cartan subgroup 
$H_a(F)$.
Then  if $-1\not\in F^{*2}$, $C(g)$ is 
not conjugate   to $g\inv$ (in other words, $-1$ is not in the
Weyl group of $H_a(F)$).
Therefore $C$ is not dualizing. 

On the other hand let
$C'=\int(\diag(i,-i)\sigma)$.
Then $C'$ is rational and dualizing. Note that 
$C'$ is an outer automorphism of $G(F)$ unless $-1\in F^{*2}$.

Now replace $SL(2,F)$ with $G(F)=PGL(2,F)$.
Both $C,C'$ factor to inner automorphisms of  $G(F)$. 
Since every semisimple element of $G(F)$ is $G(F)$-conjugate to its
inverse, $C,C'$ are both dualizing. However it is easy to see that $C$ is
not conjugate to $C'$ by an inner automorphism of $G(F)$.
\end{example}

Surprisingly, even for split groups, which have rational Chevalley
involutions, there may be no
dualizing involution.
This is illustrated by the
following example, which was pointed out by D. Prasad \cite{prasadPreprint}.

\begin{example}
\label{e:E8}
Suppose $F$ is p-adic and $G(F)$ is the split  form of $G_2,F_4$
or $E_8$.  By Lemma \ref{l:split} there is a  Chevalley involution
$C$ of $G(F)$.
However, $G(F)$ has no dualizing involution. 

To see this, assume $\tau$ is a dualizing involution. Then
$\pi^\tau\simeq\pi^*$ for all
irreducible representations $\pi$.
Every  automorphism of $G(F)$ is inner (since $\Out(G)=1$ and $G$ is both simply connected and adjoint), 
so $\pi^\tau\simeq\pi$, 
and therefore every irreducible representation is self-dual.
However, there are irreducible  representations of $G(F)$ which
are not self-dual, coming from non-self dual cuspidal unipotent representations of the group over 
the residue field. 
\end{example}

\section{Real Chevalley Involutions}
\label{s:construction}

From now on we take $F=\R$, and 
we identify $G$ with its complex points $G(\C)$. We
recall some standard theory about real forms of $G$, 
in a form convenient for our applications.
For details see 
\cite[Section 5.1.4]{ov},
\cite{helgason_book},  \cite{knapp_beyond}
or \cite[Section 3]{algorithms}.
 
A real form $G(\R)$ of $G(\C)$ is the fixed points of an
anti-holomorphic involution. 
Each complex group has two distinguished real forms: the compact one,
and the split one.

For the compact real form, fix a pinning $\caP=(H,B,\{X_\alpha\})$
and define $\{X_{-\alpha}\}$ as at
the beginning of Section \ref{s:split}.
Let $\sigma_c$ be the unique antiholomorphic automorphism of $G$
satisfying
$\sigma(X_\alpha)=-X_{-\alpha}$. Then $G(\R)=G^{\sigma_c}$ is
compact, and $H(\R)\simeq S^{1}\times\dots \times S^1$ is a compact torus.

It is clear from the definitions that the Chevalley automorphism
$C=C_{\caP}$ commutes with $\sigma_c$.  Therefore $\sigma_s=C\sigma_c$
is an antiholomorphic involution of $G$. Furthermore $G(\R)=G^{\sigma_s}$ is
split: $H(\R)\simeq\R^{*}\times\dots\times\R^*$ is a split torus.

General real forms of $G$ may be classified either by antiholomorphic or
holomorphic involutions of $G$. The latter is provided by the theory
of the Cartan involution. 

In particular, there is a bijection
\begin{equation}
\label{e:bij}
\{\text{antiholomorphic involutions }\sigma\}/G
\leftrightarrow
\{\text{holomorphic involutions }\theta\}/G
\end{equation}
(the quotients are by conjugation by $\{\int(g)\mid g\in G\}$). 
If $\sigma$ is an antiholomorphic involution, after conjugating by $G$
we may assume it commutes with $\sigma_c$, and set
$\theta=\sigma\sigma_c$. The other direction is similar.

For example, by the preceding discussion, 
$C$ is the Cartan involution of the split real
form of $G$ (and the Cartan involution of the compact real form is the identity).

Suppose $\sigma\leftrightarrow\theta$, and $\sigma,\theta$ commute. 
Let $G(\R)=G^\sigma$, $K=G^\theta$, and $K(\R)=K\cap G(\R)=G(\R)^\theta=K^{\sigma}$. 
Then $K(\R)$ is a maximal compact subgroup of $G(\R)$, with
complexification $K$.
The relationship between these groups is illustrated by a diagram.
$$
\xymatrix{
&G\ar[rd]^\sigma\ar[ld]_\theta\\
G^\theta=K\ar[rd]_\sigma&&G(\R)=G^\sigma\ar[ld]^\theta\\
&K(\R)
}
$$

\medskip

Write $\Aut(G),\Int(G)$ for the (holomorphic) automorphisms of $G$,
and the inner automorphisms, respectively. Let $\Out(G)=\Aut(G)/\Int(G)$ 
be the group of outer automorphisms.
We say an automorphism of $G$ is {\it distinguished} if it preserves
$\caP$.
The pinning $\caP$ defines an injective map 
$\Out(G)\overset s\hookrightarrow \Aut(G)$: 
$s(\phi)$ is the unique distinguished automorphism mapping to $\phi$.
If $G$ is semisimple the distinguished automorphisms embed into 
the automorphism group of the Dynkin diagram, and this is a bijection if
$G$ is simply connected or adjoint.

Now fix a holomorphic involution $\theta$ of $G$. 
Let $\delta$ be the image of $\theta$ under the map
$\Aut(G)\rightarrow\Out(G)\overset s\hookrightarrow \Aut(G)$, so
$\delta$ is distinguished.

\begin{lemma}
\label{l:theta}
After conjugating by $G$ we may assume 
\begin{equation}
\label{e:theta}
\theta=\int(h)\delta\quad\text{for some }h\in H^\delta
\end{equation}
(where the superscript denotes the $\delta$-fixed points). 
\end{lemma}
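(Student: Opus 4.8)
The plan is to normalize $\theta$ in two stages: first fix its effect modulo inner automorphisms, then fix the inner part. Since $\delta$ and $\theta$ have the same image in $\Out(G)$, the automorphism $\theta\delta\inv$ is inner, say $\theta=\int(g)\delta$ for some $g\in G$. Applying $\delta$ to the identity $\int(g)\delta=\theta=\theta\inv=\delta\inv\int(g)\inv$ and using $\delta^2=1$ gives $\int(\delta(g))=\delta\int(g)\inv\delta=\int(\delta(g\inv))$, which only tells us $g\,\delta(g)\in Z(G)$; so a priori $\theta$ is of the form $\int(g)\delta$ with $g\,\delta(g)$ central, and the first task is to conjugate $g$ into $H^\delta$ while preserving this.

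First I would arrange that $\int(g)\delta$ preserves a pair consisting of a Cartan and a Borel. The fixed-point group $G^\theta$ is reductive, so it contains a Borel subgroup of itself, hence $\theta$ preserves some Borel $B'$ and some maximal torus $H'\subset B'$ of $G$ (one takes $H'$ a $\theta$-stable maximal torus of $G$ sitting inside a $\theta$-stable Borel — standard: a $\theta$-stable maximal torus of a $\theta$-stable Borel always exists). Since all pairs $(H',B')$ are $G$-conjugate and $\delta$ already preserves $(H,B)$, after conjugating $\theta$ by $G$ I may assume $\theta$ preserves $(H,B)$ itself. Writing $\theta=\int(g)\delta$, the condition that $\int(g)\delta$ preserves $(H,B)$ together with the fact that $\delta$ preserves $(H,B)$ forces $\int(g)$ to preserve $(H,B)$, hence $g\in H$ (an inner automorphism preserving a Borel and a maximal torus in it lies in that torus). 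So now $\theta=\int(h)\delta$ with $h\in H$.

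Finally I must show $h$ can be taken in $H^\delta$. From $\theta^2=1$ we get $\int(h\,\delta(h))=1$, so $h\,\delta(h)=z\in Z(G)$. I want to modify $h$ by a $\delta$-stable element of $H$, i.e. replace $h$ by $h'=th$ with $t\in H$, which changes $\theta$ by conjugation by $t$ only if $t\in H^\delta$ — but I am allowed to conjugate $\theta$ by any element of $G$, and conjugating by $t\in H$ replaces $h$ by $t\,h\,\delta(t)\inv$. I seek $t\in H$ with $t\,h\,\delta(t)\inv\in H^\delta$, i.e. with $\delta(t\,h\,\delta(t)\inv)=t\,h\,\delta(t)\inv$, which rearranges to $(t\inv\delta(t))\,\delta(h)\,(t\inv\delta(t))\inv$... more cleanly: setting $s=h\,\delta(h)\inv\cdot(\text{correction})$, the obstruction to finding such $t$ lives in $H_1(\langle\delta\rangle,H)$-type data, i.e. it reduces to the surjectivity of the norm-type map $t\mapsto t\,\delta(t)$ on the connected torus $H$, which is surjective (it is a surjective morphism of tori, being surjective on Lie algebras since $1+d\delta$ is invertible on the $+1$-eigenspace and the $-1$-eigenspace maps in). The one subtlety is the central element $z$: since $h\,\delta(h)=z$ and I only need $h\in H^\delta$ up to this central ambiguity, I can absorb $z$ by first replacing $h$ with $hw$ where $w\,\delta(w)=z\inv$, solvable because $H\to H$, $w\mapsto w\,\delta(w)$ is surjective. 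I expect the main obstacle to be exactly this last step — tracking the central element $z$ and confirming the relevant norm map on $H$ is surjective — whereas the reduction to $h\in H$ via $\theta$-stable Borel pairs is routine structure theory.
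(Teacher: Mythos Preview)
Your overall strategy is sound and genuinely different from the paper's. The paper never invokes a $\theta$-stable Borel pair. Instead it writes $\theta=\int(g)\delta$, observes that $\delta(g)=g\inv z$ (some central $z$) forces $L=\Cent_G(g)^0$ to be $\delta$-stable, chooses a $\delta$-stable Cartan $H_1\subset L$ (so $g\in H_1$), conjugates within $H_1$ to push $g$ into $T_1=(H_1^\delta)_0$, and then conjugates the torus $T_1$ into $(H^\delta)_0$ via conjugacy of tori inside $K_\delta=G^\delta$. Your route through Steinberg's theorem on $\theta$-stable pairs lands $g$ directly in the distinguished Cartan $H$, bypassing both the auxiliary $H_1$ and the $K_\delta$-step; it is arguably cleaner.

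Your final paragraph, however, contains real errors. The map $w\mapsto w\delta(w)$ is not surjective onto $H$; its image is only $(H^\delta)_0$. More seriously, ``replacing $h$ by $hw$'' is not in general a conjugation of $\theta$: conjugating $\theta$ by $t\in H$ sends $h$ to $th\delta(t)\inv=h\cdot(t\delta(t)\inv)$, so the allowable modifications of $h$ are precisely multiplication by elements of $A:=\{t\delta(t)\inv:t\in H\}=(H^{-\delta})_0$. And even granting a solution of $w\delta(w)=z\inv$, the resulting $hw$ would satisfy $\delta(hw)=(hw)\inv$, landing it in $H^{-\delta}$ rather than $H^\delta$. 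The correct and much simpler finish: you need $h\in H^\delta\cdot A$; but $(H^\delta)_0\cdot A$ is a connected subgroup of $H$ with Lie algebra $\h^\delta+\h^{-\delta}=\h$, hence equals $H$. Thus every $h\in H$ is $H$-twisted-conjugate into $(H^\delta)_0\subset H^\delta$, and no tracking of the central element $z$ is required.
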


For example  suppose $\delta=1$, or equivalently $\theta\in\Int(G)$.
The assertion is that $\int(x)\circ\theta\circ\int(x\inv)=\int(h)$ for 
some $h\in H$. Since $\theta$ is inner write $\theta=\int(g)$ for some (semisimple) element $g\in G$.
The assertion is then: $\int(xgx\inv)=\int(h)$ for some $h\in H$. 
In other words this is the standard fact that any 
semisimple element is conjugate to an element of  $H$.

\smallskip

\begin{proof}
By the definition of $\delta$, $\theta=\int(g)\delta$ for some
semisimple element $g\in G$.

We claim $g$ is contained in a 
$\delta$-stable Cartan subgroup $H_1$.
Let $L$ be the identity component of $\Cent_G(g)$. 
Since $\theta$ is an involution, $\delta(g)=g\inv z$ for some $z\in Z$, 
and it is easy to see this implies $\delta(L)=L$. 
Take $H_1$ to be a $\delta$-stable Cartan subgroup of $M$. 
This contains $g$ and is clearly a Cartan subgroup of $G$.

Write $H_1=T_1A_1$ where $T_1$ (resp. $A_1$) 
is the identity component of $H_1^\theta$ (resp. $H_1^{-\theta}$). 
Since, for $h\in H_1$,  $h(g\delta)h\inv=h\delta(h\inv)g\delta$, 
we may assume the $A_1$ component of $g$ is trivial, i.e.  $h\in T_1$.

Let $K_\delta=G^\delta$. Use the subscript $0$ to indicate the identity component. 
Then $H_0^\delta=(H^\delta)_0$ is Cartan subgroup of
$K_{\delta,0}$. Now $T_1$ is a torus in $K_{\delta,0}$, and is therefore
$K_{\delta,0}$-conjugate to a subgroup of $H_0^\delta$. Therefore after conjugating by
$K_{\delta,0}$ we may assume $\theta=\int(h)\delta$ for $h\in H_0^\delta$. 
\end{proof}

With this choice of $\theta$, $H$ is defined over $\R$, and $H(\R)$ is a fundamental 
Cartan subgroup of $G(\R)$ (see the introduction).
We say $H$ is a fundamental Cartan subgroup of $G$ with respect to
$\theta$. 

For example
$\delta=1$ if and only if $H(\R)$ is compact.
For later use, we single out this class of groups.
We say $G(\R)$ is of equal rank if any of the following equivalent
conditions hold:
$G(\R)$ contains a compact Cartan subgroup; $H(\R)$ is compact;
 $\rank(K)=\rank(G)$; $\delta=1$; or  $\theta$ is an inner involution.

\medskip

We now give the proof of Theorem \ref{t:main}, which we break up into
steps. We first construct an involution of $G(\R)$, restricting to  $-1$ on a
fundamental Cartan subgroup.

\begin{lemma}
\label{l:step1}
Let $H(\R)$ be a fundamental Cartan subgroup. 
There is a rational Chevalley involution of $G$,
satisfying $C(h)=h\inv$ for all $h\in H(\R)$.
\end{lemma}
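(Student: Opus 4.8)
The plan is to reduce to the split, simply connected case (Lemma \ref{l:split}) by exhibiting a single automorphism of $G(\C)$ that simultaneously realizes a Chevalley involution, commutes with the relevant real structure, and acts by $h\mapsto h\inv$ on a fundamental Cartan. Fix a pinning $\caP=(H,B,\{X_\alpha\})$ compatible with the analysis following Lemma \ref{l:theta}, so that the given real form is $G(\R)=G^\sigma$ with $\sigma=\theta\sigma_c$, where (after conjugating by $G$) $\theta=\int(h_0)\delta$ for some $h_0\in H_0^\delta$, $\delta$ distinguished, and $H$ is fundamental with respect to $\theta$. Let $C=C_\caP$ be the distinguished Chevalley involution attached to $\caP$, so $C(h)=h\inv$ for $h\in H$ and $C(X_{\pm\alpha})=X_{\mp\alpha}$.

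The first step is to record the commutation relations among $C$, $\sigma_c$, $\delta$, and $\int(h_0)$. Since $\sigma_c$ and $C$ are both built from the same pinning, they commute (as already noted in the text before \eqref{e:bij}); since $\delta$ permutes the $X_\alpha$ according to a diagram automorphism and $C$ sends $X_\alpha\mapsto X_{-\alpha}$ in a way equivariant for that permutation, $C$ and $\delta$ commute as well, and similarly $\delta$ commutes with $\sigma_c$. The one genuine computation is the interaction of $C$ with $\int(h_0)$: because $C$ inverts $H$, we get $C\circ\int(h_0)\circ C\inv=\int(C(h_0))=\int(h_0\inv)$. So $C$ does \emph{not} commute with $\theta=\int(h_0)\delta$ on the nose, but $C\theta C\inv=\int(h_0\inv)\delta$, which differs from $\theta$ by $\int(h_0^{-2})$. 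The fix is to conjugate $C$ by an element of $H$: choose $t\in H_0$ with $t^2=h_0$ (possible since $H_0$ is a connected complex torus, hence divisible), and set $C'=\int(t)\circ C\circ\int(t)\inv=\int(t^2)\circ C=\int(h_0)\circ C$. One checks directly that $C'$ is still a Chevalley involution (conjugate to $C$, and $C'(h)=h\inv$ for $h\in H$ since $\int(h_0)$ is trivial on... wait—more carefully, $C'(h)=h_0 h\inv h_0\inv\cdot(h_0) $; since $H$ is abelian this equals $h\inv$), and that $C'$ commutes with $\theta$: indeed $C'\theta C'^{-1}=\int(h_0)\,C\,\theta\,C\inv\int(h_0)\inv=\int(h_0)\int(h_0\inv)\delta\int(h_0\inv)=\delta\int(h_0\inv)=\int(h_0)\delta=\theta$, using that $\delta$ fixes $h_0$. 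Since $C'$ also commutes with $\sigma_c$ (conjugating $C$ by $t\in H$ preserves commutation with $\sigma_c$ because $\sigma_c(t)$ differs from $t\inv$ by something central... this needs the parallel check that $\sigma_c\int(t)\sigma_c\inv$ and $\int(t)$ have the same effect when combined with $C$), it commutes with $\sigma=\theta\sigma_c$, hence $C'$ restricts to an involution of $G(\R)=G^\sigma$, and by construction it is the restriction of a rational (indeed holomorphic, defined over $\R$ since it commutes with $\sigma$) Chevalley involution of $G(\C)$ with $C'(h)=h\inv$ for all $h\in H(\R)\subset H$.

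The main obstacle I anticipate is bookkeeping the central discrepancies: $\theta$ an involution only forces $\delta(g)=g\inv z$ with $z\in Z(G)$, and likewise the square root $t$ of $h_0$ is only well-defined up to $2$-torsion in $H$, so the identities above hold only modulo the center and one must verify that the ambiguous central factors act trivially in all the places that matter (they do, since inner automorphisms by central elements are trivial, and $C$ acts by $z\mapsto z\inv$ on $Z$ compatibly). A secondary point is that $H_0^\delta$ rather than $H^\delta$ must be used so that the square root $t$ can be chosen inside the connected group where divisibility is available; this is exactly why Lemma \ref{l:theta} was sharpened to land in $H_0^\delta$. Once these are dispatched the lemma follows, and one notes for the sequel that the resulting $C'$ is the object whose dualizing property and uniqueness are established in the remaining steps of the proof of Theorem \ref{t:main}.
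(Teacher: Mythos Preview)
Your plan is on the right track but misses the key simplification that makes the paper's proof work: because $\theta=\int(h_0)\delta$ is an involution and $\delta(h_0)=h_0$, one has $h_0\delta(h_0)=h_0^2\in Z(G)$. Consequently $\int(h_0^{-2})$ is the \emph{identity} automorphism, so your computation $C\theta C^{-1}=\int(h_0^{-1})\delta$ already equals $\int(h_0)\delta=\theta$. The unmodified $C=C_{\caP}$ commutes with $\theta$ on the nose, and the passage to $C'=\int(h_0)C$ via a square root $t$ is unnecessary. This is exactly how the paper proceeds: it records $h^2\in Z(G)$, hence $\alpha(h)=\pm1$ for every root, and then checks directly on the $X_\alpha$ that $C\theta=\theta C$ and $\theta\sigma_c=\sigma_c\theta$.

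Your detour also introduces two genuine soft spots. First, your line $\delta\int(h_0^{-1})=\int(h_0)\delta$ ``using that $\delta$ fixes $h_0$'' is misargued: $\delta(h_0)=h_0$ only gives $\delta\int(h_0^{-1})=\int(h_0^{-1})\delta$, and getting $\int(h_0)$ instead of $\int(h_0^{-1})$ again requires $h_0^2\in Z$, the very fact you have not isolated. Second, the commutation of $C'=\int(t)C\int(t)^{-1}$ with $\sigma_c$ is left as ``this needs the parallel check''; concretely it would require $\sigma_c(t)\equiv t\pmod{Z(G)}$, which you have not arranged and which is not automatic for an arbitrary square root $t\in H_0$ of $h_0$. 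Once you note $h_0^2\in Z(G)$, both issues evaporate: $C$ itself commutes with $\theta$ and (as you already know) with $\sigma_c$, hence with $\sigma=\theta\sigma_c$, and you are done.
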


\begin{proof}
Choose $\theta$ corresponding to $\sigma$ by the bijection
\eqref{e:bij}.
By the Lemma, after conjugating $\sigma$ and $\theta$, we may assume 
$\theta=\int(h)\delta$, where $\delta$ is distinguished and
$h\in H^\delta$. 

Let $C=C_{\caP}$, the Chevalley involution defined by the splitting $\caP$, so
$C(h)=h\inv$ for $h\in H$. We claim $C$ commutes with $\sigma$.

First of all  $\theta$ and $\sigma_c$ commute.
On the one hand
\begin{subequations}
\renewcommand{\theequation}{\theparentequation)(\alph{equation}}  
\label{e:commute}
\begin{equation}
\begin{aligned}
(\theta\sigma_c)(X_\alpha)=\int(h)\delta(-X_{-\alpha})=-\int(h)(X_{-\delta\alpha})=-(\delta\alpha)(h\inv)X_{-\delta\alpha}
\end{aligned}
\end{equation}
and on the other
\begin{equation}
\begin{aligned}
(\sigma_c\theta)(X_\alpha)=\sigma_c(\int(h)X_{\delta\alpha})=\sigma_c((\delta\alpha)(h)X_{\delta\alpha})=-\overline{(\delta\alpha)(h)}X_{-\delta\alpha}.
\end{aligned}
\end{equation}
\end{subequations}
Since $\theta=\int(h)\delta$ is an involution,
$h\delta(h)\in Z(G)$ (here and elsewhere $Z$ denotes the center).
But $\delta(h)=h$, so $h^2\in Z(G)$. This
implies $\beta(h)=\pm1$ for all roots, so
$(\delta\alpha)(h\inv)=\overline{(\delta\alpha)(h)}$, and (a) and (b) are equal.

Therefore, by the discussion after \eqref{e:bij},  $\sigma=\theta\sigma_c$. 
Since $C$ commutes with $\sigma_c$ (see the beginning of this section), 
we just need to show that $C$ and $\theta$ commute.
This is similar to \eqref{e:commute}:
$(\theta C)X_\alpha=(\delta\alpha)(h\inv)X_{-\delta\alpha}$,
$(C\theta)X_\alpha=(\delta\alpha)(h)X_{-\delta\alpha}$, and these are equal since $h^2\in Z$.
\end{proof}

Now we show the Chevalley involution just constructed is dualizing
(Definition \ref{d:main}).

\begin{lemma}
Suppose $C$ satisfies the conditions of Lemma \ref{l:step1}. 
Then $C$ is dualizing, i.e. it takes every semisimple element of $G(F)$ to a
$G(F)$-conjugate of its inverse.
\end{lemma}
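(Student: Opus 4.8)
The plan is to reduce the general statement to a fact about real Cartan subgroups: every semisimple element of $G(\R)$ lies in some Cartan subgroup $T(\R)$ of $G(\R)$ defined over $\R$, so it suffices to show that for each such $T$, the involution $C$ carries $T(\R)$ into a $G(\R)$-conjugate of the inversion map on $T(\R)$. More precisely, I would show: for every $\theta$-stable Cartan subgroup $T$ of $G$ defined over $\R$, there is $g \in G(\R)$ such that $\int(g)\circ C$ restricts to $t \mapsto t\inv$ on $T(\R)$. Since every semisimple $x \in G(\R)$ lies in such a $T(\R)$, this gives $C(x) = g\inv x\inv g$ with $g \in G(\R)$, which is exactly the dualizing condition.

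First I would recall that the Cartan subgroups of $G(\R)$, up to $G(\R)$-conjugacy, are obtained from the fundamental one $H$ by Cayley transforms through noncompact imaginary roots; equivalently, any Cartan subgroup $T$ of $G(\R)$ is $G(\R)$-conjugate to a $\theta$-stable Cartan subgroup, and we may further arrange (conjugating inside $G(\R)$) that $T$ and $H$ share a maximal split torus or are related by a chain of such transforms. The key structural input is that $C$, being defined by the pinning $\caP$ and commuting with $\theta$ and $\sigma$ (as established in the previous lemma), acts on the root datum relative to $H$ by $-w_0'$ where the relevant piece is already inversion on $H$. So on $H(\R)$ we already have $C(h) = h\inv$; the content is propagating this to the other real Cartan subgroups.

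The second step is the propagation itself. Let $T$ be a $\theta$-stable Cartan subgroup of $G$, defined over $\R$, and let $T = T_c T_s$ with $T_c$ the maximal compact (the identity component of $T(\R)\cap K$-part) and $T_s$ the maximal split torus. On $T_s$, the inversion $t \mapsto t\inv$ is realized by any representative of $-1$ in the relative (real) Weyl group if one exists, but in general one does not have $-1$ in $W(G(\R),T(\R))$ — so instead I would compare $C|_T$ directly with inversion. Both $C|_T$ and inversion are involutive automorphisms of the torus $T$ acting as $-1$ on the cocharacter lattice; hence $C|_T$ composed with inversion is an automorphism of $T$ lying in $W(G,T)$ (this uses that any automorphism of $G$ preserving a Cartan and acting trivially on $X_*(T)$-mod-Weyl is inner up to... — more carefully: $C$ and inversion differ by an element normalizing $T$, i.e. by a Weyl group element $w$). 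I then need $w$ to be represented by an element of $G(\R)$. This is where I would invoke that $C$ is defined over $\R$: the element $w \in W(G,T)$ is $\sigma$-fixed (since both $C$ and inversion commute with $\sigma$ on $T(\R)$), hence $w \in W(G,T)^\sigma$; and one must lift a $\sigma$-fixed Weyl element to $G(\R)$. In general $W(G(\R),T(\R)) \subsetneq W(G,T)^\sigma$, but I expect that the particular $w$ arising here — because it comes from $C$, which is a "reflection-free" symmetry tied to $-1$ on each Cartan — actually lands in $W(G(\R),T(\R))$, or can be corrected by such. Concretely I would argue by the structure of Cayley transforms: passing from $H$ to an adjacent Cartan $T$ via a noncompact imaginary root $\alpha$, track how $C$ transforms, using that $C$ negates $\alpha$ and sends the $\mathfrak{sl}_2$-triple for $\alpha$ to one for $-\alpha$, so $C$ conjugated by the Cayley element is again inversion on the new compact directions and is inversion on the new split direction up to the reflection $s_\alpha$, which \emph{is} in $W(G(\R),T(\R))$ for the Cartan on which $\alpha$ has become real.

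The main obstacle, I expect, is precisely this last point: showing the discrepancy Weyl element lies in the \emph{real} Weyl group $W(G(\R),T(\R))$ rather than merely in $W(G,T)^\sigma$. I would handle it by induction on the split rank of $T$ (equivalently on the length of the Cayley chain from $H$), the base case being $T = H$ where $C(h)=h\inv$ already holds with $g = e$, and the inductive step using a single Cayley transform through a noncompact imaginary root and the fact that the new real reflection $s_\alpha$ is realized in $G(\R)$. The bookkeeping — that conjugating $C$ by the Cayley element and by the $SL(2)$ giving $s_\alpha$ restores inversion on all of $T(\R)$ — is routine $\mathfrak{sl}_2$-computation that I would not carry out in detail here, but it is the crux of the verification.
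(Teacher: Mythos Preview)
Your strategy is the paper's: reduce to real Cartan subgroups, start from the fundamental one where $C(h)=h^{-1}$ already, and propagate to all others by induction along Cayley transforms through noncompact imaginary roots, using the rank-one $G_\alpha\simeq SL(2)$ piece to realize the correcting conjugation inside $G(\R)$.

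One point to tighten: your middle paragraph tacitly assumes $C$ stabilizes an arbitrary $\theta$-stable Cartan $T$ (so that ``$C|_T$'' and ``$C|_T$ composed with inversion lies in $W(G,T)$'' make sense), but $C$ is a Chevalley involution for $H$, not for $T$, and there is no reason it preserves $T$. The paper avoids this by never asking whether $C$ preserves the new Cartan: at the inductive step one already has $\tau=\int(g)\circ C$ acting as inversion on the current $H(\R)$, hence $\tau$ normalizes $G_\alpha$ (it sends $\alpha$ to $-\alpha$), and therefore $\tau$ carries any split Cartan $H'_\alpha(\R)$ of $G_\alpha(\R)$ to another split Cartan of $G_\alpha(\R)$; since these are all $G_\alpha(\R)$-conjugate, one conjugates back by some $x\in G_\alpha(\R)$ and checks $\int(x)\circ\tau$ is inversion on $H'(\R)=\ker(\alpha)H'_\alpha(\R)$. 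This replaces your ``routine $\mathfrak{sl}_2$-computation'' with a one-line conjugacy fact and sidesteps the stabilization issue entirely.
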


\begin{proof}
This is true for $g$ in the fundamental Cartan subgroup $H(\R)$. We
obtain the result on other Cartan subgroups using Cayley transforms.

We proceed by  induction, so change notation momentarily, and assume
$H$ is any $\theta$ and $\sigma$-stable Cartan subgroup, such that
$C(h)$ is $G(\R)$-conjugate to $h\inv$ for all $h\in H(\R)$.
Taking $h$ regular, we see there is $g\in\Norm_{G(\R)}(H(\R))$ such
that, if $\tau=\int(g)\circ C$, then $\tau|_{H(\R)}=-1$.

Suppose $\alpha$ is a root of $H$. 
Let $G_\alpha$ be the derived group of the centralizer of the kernel
of $\alpha$, and set $H_\alpha=H\cap G_\alpha$.
Thus $G_\alpha$ is locally isomorphic to $SL(2)$, 
and $H=\ker(\alpha)H_\alpha$.

Now assume $\alpha$ is a noncompact imaginary root, which amounts 
to saying that $G_\alpha$ is $\theta,\sigma$ stable, $G_\alpha(\R)$ is
split, and $H_\alpha(\R)$ is a compact Cartan subgroup of
$G_\alpha(\R)$.
Replace $H_\alpha$ with a $\theta,\sigma$-stable 
split Cartan subgroup $H'_\alpha$ of $G_\alpha$.
Since $\tau$ normalizes $G_\alpha$, and is defined over $\R$, 
$\tau(H'(\R))$ is another split Cartan subgroup of $G_\alpha(\R)$. 
Therefore we can find $x\in G_\alpha(\R)$ 
so that $x(\tau(h))x\inv=h\inv$ for all $h\in H'_\alpha(\R)$.

Let $H'=\ker(\alpha)H'_\alpha$. 
Then $(\int(x)\circ\tau)(h)=h\inv$ for all $h\in H'(\R)$.

Every Cartan subgroup of $G(\R)$ is obtained, up to conjugacy by
$G(\R)$, by a series of Cayley transforms from the fundamental Cartan
subgroup. The result follows.
\end{proof}

Finally, the uniqueness statement of Theorem \ref{t:main} comes down to the 
next Lemma.

\begin{lemma}
Suppose $\tau$ is an automorphism of $G(\R)$ such that the restriction of  $\tau$ to a 
fundamental Cartan subgroup $H(\R)$ is trivial. Then $\tau=\int(h)$ for some $h\in H(\R)$.
\end{lemma}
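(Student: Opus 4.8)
The statement says: an automorphism $\tau$ of $G(\R)$ that is trivial on a fundamental Cartan subgroup $H(\R)$ must be inner, given by conjugation by an element of $H(\R)$ itself. The idea is to first complexify. Since $H(\R)$ is Zariski-dense in $H = H(\C)$ and $\tau$ is (presumably) algebraic — or at least, any automorphism of $G(\R)$ extends to a holomorphic automorphism of $G(\C)$ by the theory of real forms — $\tau$ extends to a holomorphic automorphism of $G$ that is trivial on $H$. Now the key structural fact I would invoke: an automorphism of a connected reductive group fixing a Cartan subgroup $H$ pointwise is an inner automorphism $\int(t)$ for some $t \in H$. This is because such a $\tau$ fixes $H$, hence permutes the root spaces $\g_\alpha$ and fixes each (it acts trivially on $X^*(H)$ since it's trivial on $H$ and $H$ is its own centralizer — actually one needs $\tau$ to preserve a system of positive roots up to $W$, but since $\tau$ fixes all of $H$ it fixes all roots, so it preserves any Borel containing $H$), so $\tau$ is a distinguished-type automorphism relative to the pinning $(H, B, \{X_\alpha\})$ but acting as scalars $c_\alpha$ on each simple $X_\alpha$; the cocycle condition from multiplicativity of these scalars over the root lattice shows $(c_\alpha)$ comes from evaluation of a character, i.e. $\tau = \int(t)$ with $\alpha(t) = c_\alpha$.

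**Steps in order.** (1) Extend $\tau$ to a holomorphic automorphism of $G(\C)$, trivial on $H = H(\C)$, using Zariski density of $H(\R)$ in $H$ and the fact that $\R$-automorphisms of $G(\R)$ arise from $\C$-automorphisms of $G(\C)$ commuting with the conjugation $\sigma$. (2) Observe $\tau$ fixes a Borel $B \supset H$ (any root is fixed, so $\tau$ preserves positivity), and after adjusting the pinning, $\tau(X_\alpha) = c_\alpha X_\alpha$ for scalars $c_\alpha \in \C^*$, $\alpha$ simple. (3) Deduce $c_\alpha = \alpha(t)$ for a suitable $t \in H$, so that $\int(t)^{-1}\tau$ fixes the whole pinning $\caP$; since an inner automorphism fixing a pinning is trivial (stated in the excerpt), $\tau = \int(t)$. (4) Descend to $\R$: we have $\tau = \int(t)$ with $t \in H$; since $\tau$ commutes with $\sigma$, $\int(\sigma(t)) = \int(t)$, so $\sigma(t) \in tZ(G)$; a cohomological argument (or direct manipulation, adjusting $t$ by an element of $H$ and using that $H^1$ of the relevant torus or a connectedness argument for the fundamental Cartan) lets us replace $t$ by an element of $H(\R)$ without changing $\int(t)$. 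Then $\tau = \int(h)$ with $h \in H(\R)$, as desired.

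**The main obstacle.** The genuinely delicate point is step (4): getting $h$ to lie in $H(\R)$ rather than merely in $H(\C)$. Knowing $\int(t) = \int(\sigma(t))$ only gives $\sigma(t)t^{-1} \in Z(G)$, and one must show this central element can be killed by modifying $t$ within the coset $tZ(G)$ — equivalently, that the obstruction class vanishes. Here is where the hypothesis that $H(\R)$ is \emph{fundamental} (most compact) should enter: for a fundamental Cartan, the relevant surjectivity/connectedness statements are available (e.g. $H(\R) \to (H/Z(G))(\R)$ has the right image, or one uses that $t^2 \in Z(G)$-type constraints force $t$ into a manageable subgroup), whereas for a split Cartan this step can genuinely fail. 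I would expect the author to handle this via the structure theory of real Cartan subgroups, perhaps reducing to rank-one subgroups $G_\alpha$ as in the previous lemma, or by a short argument that $\sigma(t)t^{-1} \in Z(G)$ together with $\sigma$-equivariance forces $\sigma(t)t^{-1}$ to be a square in $Z(G)^\sigma$ in a way that can be absorbed. The rest of the argument is standard reductive-group bookkeeping.
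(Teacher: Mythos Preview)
Your proposal is correct and follows essentially the same route as the paper: complexify, use that an automorphism trivial on $H(\C)$ is $\int(t)$ for some $t\in H(\C)$ (the paper cites this as \cite[Lemma 2.4]{contragredient} rather than sketching it as you do in steps (2)--(3)), and then descend to $\R$. For step (4), the paper uses precisely the first of your suggested mechanisms: from $\sigma(h)=hz$ with $z\in Z(G(\C))$ one gets $p(h)\in H_{ad}(\R)$, and the place where ``fundamental'' enters is the fact that $H_{ad}(\R)$ is connected, so $p:H(\R)\to H_{ad}(\R)$ is surjective and one can choose $h'\in H(\R)$ with $p(h')=p(h)$, i.e.\ $h\in H(\R)Z(G(\C))$.
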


\begin{proof}
Since both $\R$ and $\C$ play a role here we write $G(\C)$ to
emphasize the complex group.
After complexifying, $\tau$ is an automorphism of $G(\C)$ which is
trivial on $H(\C)$.  It is well known that $\tau=\int(h)$ for some
$h\in H(\C)$ (for example see \cite[Lemma 2.4]{contragredient}).  It is enough
to show that $h\in H(\R)Z(G(\C))$.

Since $\tau$ normalizes $G(\R)$, $\sigma(h)=hz$ for
some $z\in Z(G(\C))$.  Writing $p$ for the map to the adjoint group, this
says $p(h)\in H_{ad}(\R)$. It is well known that $H_{ad}(\R)$ is connected
(this is where we use that $H(\R)$ is fundamental), so
the map $p:H(\R)\rightarrow H_{ad}(\R)$ is surjective.
Therefore we can find $h'\in H(\R)$ with $p(h')=p(h)$, i.e. $h=h'z\in H(\R)Z(G(\C))$. 
\end{proof}

\begin{lemma}
\label{l:uniqueness}
Any two automorphisms of $G(\R)$, restricting to $-1$ on 
a fundamental Cartan subgroup, are conjugate by an inner automorphism
of $G(\R)$.
\end{lemma}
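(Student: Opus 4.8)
The plan is to reduce Lemma \ref{l:uniqueness} to the immediately preceding lemma by composing one involution with the inverse of the other. Suppose $C_1$ and $C_2$ are automorphisms of $G(\R)$, each restricting to $-1$ on a fundamental Cartan subgroup. Since all fundamental Cartan subgroups of $G(\R)$ are $G(\R)$-conjugate, after replacing $C_2$ by $\int(g)\circ C_2\circ\int(g\inv)$ for a suitable $g\in G(\R)$ we may assume both $C_1$ and $C_2$ restrict to $-1$ on the \emph{same} fundamental Cartan subgroup $H(\R)$.

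Next I would consider $\tau=C_2\circ C_1\inv$. This is an automorphism of $G(\R)$, and on $H(\R)$ it acts as $(-1)\circ(-1)=1$, i.e. $\tau$ is trivial on $H(\R)$. By the preceding Lemma, $\tau=\int(h)$ for some $h\in H(\R)$. Hence $C_2=\int(h)\circ C_1$, which exhibits $C_2$ as the composition of $C_1$ with an inner automorphism of $G(\R)$; combined with the first conjugation, this shows the original $C_1,C_2$ are conjugate by an inner automorphism of $G(\R)$, as claimed.

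The only point requiring slight care is the very first reduction: one must know that any two fundamental Cartan subgroups of $G(\R)$ are conjugate under $G(\R)$ (not merely under $G(\C)$), which is the standard fact recalled in the introduction, and that conjugating $C_2$ by $\int(g)$ with $g\in G(\R)$ preserves the property of being an automorphism of $G(\R)$ restricting to $-1$ on the (new) fundamental Cartan subgroup $\int(g)H(\R)\int(g\inv)$. Both are immediate. I do not expect a genuine obstacle here: essentially all the content has already been placed in the previous lemma, and this statement is just the bookkeeping that packages it into the uniqueness clause of Theorem \ref{t:main}.

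Finally, I would remark that this completes the proof of Theorem \ref{t:main}: existence of a fundamental Chevalley involution is Lemma \ref{l:step1}, the fact that it is the restriction of a rational Chevalley involution of $G(\C)$ and is dualizing are the two intervening lemmas, and Lemma \ref{l:uniqueness} is precisely the asserted uniqueness up to inner conjugacy.
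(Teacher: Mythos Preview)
Your reduction is correct up to the line $C_2=\int(h)\circ C_1$ with $h\in H(\R)$, but the conclusion you draw from it is not. You write that this ``exhibits $C_2$ as the composition of $C_1$ with an inner automorphism'' and that therefore $C_1,C_2$ are conjugate by an inner automorphism. These are different statements: $C_2=\int(h)\circ C_1$ is a coset relation in $\Aut(G(\R))/\Int(G(\R))$, whereas conjugacy means $C_2=\int(g)\circ C_1\circ\int(g)\inv$ for some $g\in G(\R)$. The first does not formally imply the second.

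The missing step is exactly what the paper provides. Since $H(\R)$ is a \emph{fundamental} Cartan subgroup it is connected, so $h$ has a square root $x\in H(\R)$. Using that $C_1$ inverts $H(\R)$, hence $C_1(x\inv)=x$, one computes
\[
\int(x)\circ C_1\circ\int(x)\inv \;=\; \int\bigl(x\,C_1(x\inv)\bigr)\circ C_1 \;=\; \int(x^2)\circ C_1 \;=\; \int(h)\circ C_1 \;=\; C_2,
\]
which is the required conjugacy. The connectedness of $H(\R)$ is essential here: without a square root of $h$ the passage from ``differ by $\int(h)$'' to ``conjugate'' would fail. So the argument is salvageable, but as written it stops one step short.
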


\begin{proof}
Suppose $\tau,\tau'$ satisfying the conditions, with respect to a
fundamental Cartan subgroup $H(\R)$.
By the previous Lemma  $\tau'=\int(h)\circ\tau$ for some $h\in H(\R)$.
Since $H(\R)$ is connected, choose $x\in H(\R)$ with $x^2=h$. 
Then $\tau'=\int(x)\circ\tau\circ\int(x\inv)$. 
\end{proof}

This completes the proof of Theorem \ref{t:main}.

\begin{remark}
\label{r:deduce}
It is also possible to deduce Theorem \ref{t:main} from the special
case of \cite[Chapter I, Corollary 7.4]{borelWallach}, 
which is essentially about the Lie algebra. 
According to this result (actually, its proof), 
if $G(\C)$ is semisimple and simply connected, there is a
rational Chevalley $C$ involution of $G(\C)$, whose restriction to
$G(\R)$ is dualizing. 

Since $C$ acts by inverse on the center of $G(\C)$, it preserves
any subgroup of the center, and therefore factors 
to any quotient of $G(\C)$. 
Similarly, any 
complex reductive group is a quotient of a simply connected semisimple
group and a torus, and a similar argument holds in this case.
\end{remark}

\section{Groups for which every representation is self-dual}
\label{s:every}

We first consider the elementary question of when every L-packet is
self-dual (Proposition \ref{p:lpacket}).

Fix a real form $G(\R)$ of $G(\C)$,   choose $\theta$  as usual, and let
$K(\C)=G(\C)^\theta$ (see Section \ref{s:construction}).  Let $\g=\Lie(G(\C))$.
By an irreducible representation of $G(\R)$ we  mean 
an irreducible $(\g,K(\C))$-module, or equivalently 
an irreducible admissible representation of $G(\R)$ on a complex Hilbert space.
See \cite[Section 0.3]{vogan_green}.

We now identify $G$ with $G(\C)$, $K$ with $K(\C)$, and similarly for other.
We will always write $\R$ to indicate a real group.

\medskip

\begin{proof}[Proof of Proposition \ref{p:lpacket}]
Suppose an L-packet  $\Pi$
is defined by an  admissible homomorphism  $\phi:W_\R\rightarrow \LG$.
By \cite[Theorem 1.3]{contragredient} the contragredient L-packet 
corresponds to $C\circ\phi$, where $C$ is the Chevalley automorphism
of $\LG$.
Therefore every L-packet is self-dual if and only if this action is trivial, up
to conjugation by $\Ch G$, i.e. the Chevalley automorphism is inner
for $\Ch G$. This is the case if and only if $-1\in W(\Ch G,\Ch
H)\simeq W(G,H)$.
\end{proof}

\begin{remark}
\label{r:-1}
By the classification of root systems, $-1$ is in  the Weyl group of an irreducible
root system if and only if it  is of type
$A_1$, $B_n$, $C_n$, $D_{2n}$, $F_4$, $G_2$, $E_7$,  or $E_8$. It is worth noting that 
if $G$ is simple and simply connected, $-1\in W(G,H)$ if and only if
$Z(G)$ is an elementary two-group (one direction is obvious, and  the other is case-by-case). 
\end{remark}

We are interested in real groups $G(\R)$ for which
every irreducible representation  is self-dual.
By Proposition \ref{p:lpacket} an obvious necessary condition is  $-1\in W(G,H)$.
We first prove Theorem \ref{t:every}, which gives 
a necessary and sufficient condition,  and then give more
detail in some special cases.

Let $H_f$ be the centralizer in $G$ of a Cartan subgroup of $K_0$ (the
subscript indicates identity component).  Let $H_K=H_f\cap K$. This is
an abelian subgroup of the (possibly disconnected) group $K$, and
$H_{K,0}=H_K\cap K_0$ is a Cartan subgroup of $K_0$.  Then $H_f$ is a
fundamental Cartan subgroup of $G$ with respect to $\theta$ (see
\cite[Definition 3.1]{khat}).  For example, choose $\theta$ as in
Lemma \ref{l:theta}. Then $H_f$ is the fixed Cartan subgroup of the
pinning $\caP$.

\medskip

\begin{proof}[Proof of Theorem \ref{t:every}]
Using standard facts about characters of representations, viewed as
functions on the regular semisimple elements, it is easy
to see that every irreducible representation  is self-dual if and only if
\begin{equation}
\label{e:conj}
\text{ every regular semisimple element is  $G(\R)$-conjugate to its inverse}.
\end{equation}
Assume $-1\in  W(G(\R),H_f(\R))$,
so there is an inner automorphism $\tau$ of $G(\R)$ acting by $-1$ on
$H_f(\R)$.
By Theorem \ref{t:main}, if $g$ is semisimple, $\tau(g)$ is
$G(\R)$-conjugate to $g\inv$. Since $\tau$ is inner this gives
\eqref{e:conj}.

Conversely suppose \eqref{e:conj} holds. Let $h$ be a regular element
of $H_f(\R)$. Then $h\inv=xhx\inv$ for some $x\in G(\R)$, and by
regularity 
$x$ normalizes $H_f(\R)$. Therefore $-1\in W(G(\R),H_f(\R))$.
\end{proof}

\begin{subequations}
\renewcommand{\theequation}{\theparentequation)(\alph{equation}}  
\label{e:WKH}

It is helpful to state this result in terms of the complex group $K$,
rather than the real group $G(\R)$.
The groups
\begin{equation}
W(G(\R),H_f(\R))=\Norm_{G(\R)}(H_f(\R))/H_f(\R).
\end{equation}
and 
\begin{equation}
W(K,H_f)=\Norm_K(H_f)/H_K
\end{equation}
are isomorphic. We reiterate that $K, H_f$ and $H_K$ are complex.
Also consider 
\begin{equation}
W(K,H_K)=\Norm_K(H_K)/H_K.
\end{equation}
\end{subequations}
This is defined solely in terms of $K$; the difference between (b) and (c)
is whether we consider an element to be an automorphism of $H_f$ or $H_K$ (see the next Remark). 
This is also isomorphic to
(a) and (b), and is useful in computing these groups.  

Some care is required here due to the fact that $K$, equivalently
$G(\R)$, may be disconnected.  If $K$ is connected then $W(K,H_K)$ is
the Weyl group of the root system of $H_K$ in $K$, but otherwise
$W(K,H_K)$ may not be the Weyl group of a root system.

A key role is played by the condition $-1\in W(K,H_f)$.
We need to keep in mind 
the following dangerous bend
concerning the meaning of $-1$.

\begin{remark}
\label{r:dangerous}
Suppose $-1\in W(K,H_K)$. By definition this means there is an element
$g\in \Norm_K(H_K)$ such that $ghg\inv=h\inv$ for all $h\in H_K$. 
However, although $g$ normalizes $H_f$,
it is not necessarily the case that $ghg\inv=h\inv$ for all $h\in
H_f\supset H_K$.

In other words, if $\rank(K)\ne \rank(G)$, $-1\in W(K,H_K)$ does not
imply $-1\in W(K,H_f)$, even though these two groups are isomorphic.

On the other hand $-1\in W(G(\R),H_f(\R))$ if and only if $-1\in W(K,H_f)$.
\end{remark}

\begin{example}
Let $G=SL(3,\C)$, $G(\R)=SL(3,\R)$. 
Then $-1\not\in W(G,H_f)$, so {\it a fortiori} $-1\not\in W(K,H_f)$.
On the other hand $K=SO(3,\C)$, 
$W(K,H_K)$ is the Weyl group of type $A_1$, 
and $-1\in W(K,H_K)$.

We can choose $H_f=\{(z,w,\frac1{zw})\}\mid z,w\in\C^*\}$, 
and $H_K=\{(z,\frac1z,1)\}\subset H_f$.
The  nontrivial Weyl group element of $W(K,H_K)$ acts by exchanging the first 
two coordinates.
This acts by inverse on $H_K$, but not $H_f$.
\end{example}

If $K$ is
connected, it is an elementary  root system check to determine if $-1\in
W(K,H_K)$ (see Remark \ref{r:-1}). In the equal rank case this is all that is needed, although
in the unequal rank case some care is required to determine if
$-1\in W(K,H)$.

By the isomorphism of \eqref{e:WKH}(a) and (b),
Theorem \ref{t:every} can be stated in terms of $W(K,H_f)$.

\begin{corollary}
\label{c:everyalt}
Every irreducible representation of $G(\R)$ is self-dual if and only if 
$-1\in W(K,H_f)$.
\end{corollary}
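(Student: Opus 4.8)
The plan is to deduce Corollary \ref{c:everyalt} directly from Theorem \ref{t:every} by means of the isomorphism \eqref{e:WKH}. Theorem \ref{t:every} states that every irreducible representation of $G(\R)$ is self-dual if and only if $-1\in W(G(\R),H_f(\R))$, and \eqref{e:WKH} provides an isomorphism $W(G(\R),H_f(\R))\simeq W(K,H_f)$. So the entire content is to verify that this isomorphism matches up the two notions of ``$-1$'': the element acting by inversion on $H_f(\R)$ on the one side, and the element acting by inversion on $H_f$ on the other.

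First I would recall that $H_f(\R)$, being a real torus, is Zariski-dense in its complexification $H_f$, so an element of $\Norm_{G(\R)}(H_f(\R))$ inverts $H_f(\R)$ if and only if it inverts all of $H_f$; thus the assertion $-1\in W(G(\R),H_f(\R))$ is literally the statement that some element of $\Norm_{G(\R)}(H_f(\R))$ acts by inversion on $H_f$. I would then invoke the final assertion of Remark \ref{r:dangerous}, namely that $-1\in W(G(\R),H_f(\R))$ if and only if $-1\in W(K,H_f)$; this records exactly the compatibility of the isomorphism \eqref{e:WKH}(a)$\simeq$(b) with the natural actions of both groups on $H_f$, and it is precisely here that the choice of the \emph{fundamental} Cartan $H_f$, rather than $H_K$, is what makes the statement clean. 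Combining the two equivalences yields the Corollary.

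The argument is essentially a bookkeeping translation, so there is no serious obstacle; the one point that requires care — and which Remark \ref{r:dangerous} is designed to flag — is that in the unequal rank case ``$-1$'' must be interpreted as inversion of $H_f$, not of the smaller group $H_K$, since $-1\in W(K,H_K)$ does not in general imply $-1\in W(K,H_f)$. With that understood, the equivalence of Theorem \ref{t:every} transports verbatim across the isomorphism \eqref{e:WKH}.
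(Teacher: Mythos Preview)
Your proposal is correct and matches the paper's approach exactly: the paper states the Corollary as an immediate restatement of Theorem~\ref{t:every} via the isomorphism \eqref{e:WKH}(a)$\simeq$(b), and your argument simply spells this out, correctly invoking Remark~\ref{r:dangerous} to ensure the two meanings of ``$-1$'' agree.
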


Next we  prove Corollary \ref{c:every}, which gives another condition, 
in terms of $K$, 
for every representation of $G(\R)$ to be self dual.

\medskip

\begin{proof}[Proof of Corollary \ref{c:every}]
Every irreducible representation $\mu$ of $K(\R)$ is the unique lowest $K(\R)$-type of
an   irreducible representation $\pi$ of $G(\R)$ \cite[Theorem 1.2]{khat}. 
Since the lowest $K(\R)$-type of $\pi^*$ is $\mu^*$, $\pi\simeq \pi^*$
implies $\mu\simeq\mu^*$. This proves one direction. 

Conversely, by Corollary \ref{c:everyalt} we need to show every
irreducible representation of $K(\R)$, equivalently $K=K(\C)$, is self-dual implies $-1\in W(K,H_f)$.

We first show that  $-1\in W(K,H_K)$ and $-1\in
W(G,H_f)$ implies
$-1\in W(K,H_f)$.
This is obvious if $H_K=H_f$ (the equal rank case). Otherwise (here we
need the assumption that $-1\in W(G,H_f)$) choose $g\in G$ such that
$ghg\inv=h\inv$ for all $h\in H_f$. Also choose $k\in K$ satisfying
$khk\inv=h\inv$ for all $h\in H_K$. Then $gk\inv\in\Cent_G(H_K)=H_f$.
This implies $khk\inv=h\inv$ for all $h\in H_f$.

So it is enough to show that if  every irreducible representation of $K$ is
self-dual then $-1\in W(K,H_K)$.
If $K$ is connected this follows from  Corollary 
\ref{c:everyalt} applied to $K$.

For $\lambda\in X^*(H_{K,0})$  (the algebraic characters of the torus $H_{K,0}$)
let $\pi_\lambda$ be the irreducible
representation of $K_0$ with extremal weight $\lambda$. 
Then $\pi_\lambda^*=\pi_{-\lambda}$. 

Consider the induced representation
$I=\Ind_{K_0}^K(\pi_\lambda)$.
The restriction of $I$ to $K_0$ contains $\pi_\lambda$.
Since $I$ is self-dual by hypothesis, this restriction also contains
$\pi_{-\lambda}$.

It is easy to see
that every extremal weight of the
restriction of this representation to $K_0$ is $W(K,H_K)$-conjugate to
$\lambda$ (choose representatives of $K/K_0$ in $\Norm_K(H_K)$, and
use the fact that $K_0$ is normal in $K$). Therefore $-\lambda$ is
$W(K,H_K)$-conjugate to $\lambda$.  Taking $\lambda$ generic
this implies $-1\in W(K,H_K)$.

If every irreducible representation of $K$ is self-dual then $-1\in
W(K,H)$. 
If $\rank(G)=\rank(K)$ this implies $-1\in W(G,H_f)$, giving the final assertion.
\end{proof}

\begin{remark}
Here is an example of an unequal rank group for which Condition (a) in \eqref{c:every} holds, but not (b).
Take  $G(\R)=SL(2n+1,\R)$, $K=SO(2n+1,\C)$.
Then $-1\in W(K,H_K)$, and every irreducible representation of
$SO(2n+1,\C)$ is self-dual. However this is not 
the case (for example for minimal principal series) for $SL(2n+1,\R)$, 
since $-1\not\in W(G,H_f)$. 
\end{remark}

Here is a practical way to determine if every irreducible
representation of $G(\R)$ is self-dual.

First assume $G(\R)$ is of equal rank (see the discussion after Lemma
\ref{l:theta}).
Then $\theta$ is inner, so write  $\theta=\int(x)$ for some $x\in G$, 
with $x^2\in Z(G)$. 

Assume for the moment that $-1\in W(G,H_f)$ (recall $G=G$ and $H_f$ are complex);  this implies $Z(G)$ is an
elementary two-group. We say the real form defined by $\theta$ is {\it pure} if $x^2=1$. 
Since $Z(G)$ is a two-group, this condition is independent of the choice of $x$ such that
$\theta=\int(x)$.
(In other words, although  purity is typically only well-defined  
as a property of {\it strong} real forms \cite[Definition
5.5]{algorithms},
it is a  well-defined property of real forms provided $-1\in W(G,H_f)$.)
Every real form is pure if $G$ is adjoint. 

\begin{corollary}
\label{c:practical}
Assume $G(\R)$ is simple. 
Every irreducible representation of $G(\R)$ is self-dual if and only
if both of these  conditions hold:
\begin{enumerate}
\item[(a)]   $-1\in W(G,H_f)$ 
\item[(b)]  if $G(\R)$ is of equal rank, it is a pure real form.
\end{enumerate}
\end{corollary}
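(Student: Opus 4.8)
The plan is to combine Corollary~\ref{c:everyalt} (every irreducible representation of $G(\R)$ is self-dual iff $-1\in W(K,H_f)$) with the analysis already carried out in the proof of Corollary~\ref{c:every}, and then to pin down exactly when, in the equal rank case, $-1\in W(K,H_K)$ can be promoted to $-1\in W(K,H_f)$; the purity condition is precisely the obstruction. So first I would reduce to showing: \emph{$-1\in W(K,H_f)$ if and only if (a) $-1\in W(G,H_f)$ and (b) the real form is pure when $G(\R)$ is of equal rank.} The necessity of (a) is immediate from Proposition~\ref{p:lpacket} (or directly: $W(K,H_f)\subset W(G,H_f)$). For the unequal rank case, (b) is vacuous, so there I would argue that (a) already implies $-1\in W(K,H_f)$: here $H_f\ne H_K$, $\delta\ne 1$, and $K=G^\delta$ with $\delta=C\circ\int(h_0)$-type behavior — more simply, in the unequal rank case the distinguished involution $\delta$ acts on $H$ and the element of $W(G,H_f)$ realizing $-1$ can be arranged to commute with $\delta$ (since $-1$ is central in $W(G,H_f)$ it is fixed by the diagram automorphism induced by $\delta$), hence descends to an element of $W(K,H_f)$. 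I would isolate this as the one genuinely new observation and phrase it carefully.

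For the equal rank case, $H_f=H_K=H$ (a compact Cartan subgroup), $\theta=\int(x)$ with $x^2\in Z(G)$, and $K=\Cent_G(x)$ (or its identity component issues aside, $G^\theta$). Assuming (a), $Z(G)$ is an elementary two-group (Remark~\ref{r:-1}), so purity of the real form is well-defined. I would show: $-1\in W(K,H)$ iff $x^2=1$. For the ``if'' direction, if $x^2=1$ then $x\in G$ is an involution normalizing $H$; pick $n\in\Norm_G(H)$ realizing $-1\in W(G,H)$ (possible by (a)). I want a $-1$-element inside $K=G^\theta$. The point: $\int(n)$ commutes with $\int(x)$ modulo center — indeed $\int(n)\int(x)\int(n)\inv=\int(nxn\inv)$ and $nxn\inv\in H$ with $(nxn\inv)^2 = n x^2 n\inv=1$, while $nxn\inv$ and $x\inv=x$ differ by a torsion element; one checks $nxn\inv=x$ using that $x$ is $\theta$-fixed and the $W$-action. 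Hence $n\in\Norm_K(H)\cdot Z$, giving $-1\in W(K,H)$. Conversely, if $-1\in W(K,H)$, pick $k\in\Norm_K(H)$ with $kxk\inv=x$ (automatic, $x\in Z(K)$ near enough) acting by $-1$ on $H$; then $x$ itself and $kxk\inv$ coincide, which forces $x$ to be fixed under inversion, and a short argument (using that $x\in H$ and that the $(-1)$-element squares to an element we can normalize) shows $x^2=1$ — equivalently, lifting the $(-1)$-Weyl element to an involution in $G$ forces it to actually centralize $x$ on the nose, which is the purity statement.

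The main obstacle I expect is the equal rank ``only if'' direction — showing $-1\in W(K,H)$ forces the real form to be pure — because it requires tracking the difference between ``$n^2\in H\cap Z(G)$'' and ``$n^2=1$'' for a representative $n$ of the longest Weyl element, and relating this square to the square $x^2$ of the element defining $\theta$. The cleanest route is probably: $\theta=\int(x)$ lies in $\Int(G)$, and an element of $W(K,H)$ inverting $H$ lifts to $n\in K\subset G$ with $n$ inverting $H$; then $\int(n)$ is a Chevalley-type involution of $G$ commuting with $\theta$, and by the uniqueness of fundamental Chevalley involutions (Theorem~\ref{t:main}, applied appropriately, or by the rigidity of pinnings) one deduces $n$ can be taken in the preimage of the fundamental Chevalley involution, whose square on $H$ is controlled by $\Ch\rho$ — and purity is exactly the condition that this square lands in $1$ rather than in a nontrivial element of $Z(G)$. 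I would state the equal rank equivalence as a small lemma, prove it via this lift-and-compare-squares argument, and then assemble: (a)$\wedge$(b) $\Leftrightarrow$ $-1\in W(K,H_f)$ $\Leftrightarrow$ (by Corollary~\ref{c:everyalt}) every irreducible representation of $G(\R)$ is self-dual.
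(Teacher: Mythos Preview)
Your equal-rank argument is essentially correct but you are working much harder than necessary, and the detour you propose via Theorem~\ref{t:main} and $\Ch\rho$ is beside the point. The paper's argument is one line: after conjugating so that $x\in H_f$, take any $g\in\Norm_G(H_f)$ acting by $-1$ on $H_f$ and compute
\[
\theta(g)=xgx^{-1}=x(gx^{-1}g^{-1})g=x\cdot x\cdot g=x^2g,
\]
since $g$ inverts $x\in H_f$. Thus $g\in K=G^\theta$ if and only if $x^2=1$. Your version discovers the same mechanism (that $nxn^{-1}=x^{-1}$ and that $k\in K$ forces $kxk^{-1}=x$), but obscures it with talk of ``differ by a torsion element'', ``$x$ is $\theta$-fixed'', and a proposed appeal to the uniqueness of Chevalley involutions that is neither needed nor obviously helpful.

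The unequal-rank part has a genuine gap. You assert that because $-1$ is central in $W(G,H_f)$ and hence fixed by the automorphism of $W$ induced by $\delta$, it ``descends to an element of $W(K,H_f)$''. But $W(K,H_f)=\Norm_K(H_f)/H_K$ requires an actual representative in $K=G^\theta$, and $\theta$-invariance of a Weyl group element does not automatically produce one. Concretely: if $g\in\Norm_G(H_f)$ represents $-1$, then $\theta(g)$ also represents $-1$, so $\theta(g)=gt$ for some $t\in H_f$ with $\theta(t)=t^{-1}$; to replace $g$ by $gs$ with $\theta(gs)=gs$ one needs $t=s\theta(s)^{-1}$, and there is no general reason such $s$ exists (it is a question about components of $H_f^{-\theta}$). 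The paper does not attempt a uniform argument here. It first treats the complex case directly (where $G=G_1\times G_1$, $K=G_1^\Delta$, and the claim is immediate), and then observes that among simple, non-complex, unequal-rank groups the condition $-1\in W(G,H_f)$ survives only in type $D_{2n}$, i.e.\ groups locally isomorphic to $SO(p,q)$ with $p,q$ odd and $p+q\equiv 0\pmod 4$; for these it checks $-1\in W(K,H_f)$ by hand, reducing essentially to $Spin(3,1)\simeq SL(2,\C)$. You should either fill the lifting gap with a real argument or follow the paper's case-by-case route.
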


\begin{proof}
First assume we are in the equal rank case. 
By Theorem \ref{t:main} we have to show
\begin{equation}
\label{e:practical}
-1\in W(G,H_f), x^2=1\Leftrightarrow -1\in W(K,H_f).
\end{equation}
After conjugating by $G$  we may assume $x\in H_f$.
Suppose $g\in G$ satisfies $ghg\inv=h\inv$ for all $h\in H_f$.
Then $\theta_x(g)=xgx\inv=x(gx\inv g\inv)g=x^2g$.
Therefore $g\in K$ if and only if $x^2=1$. 

Now suppose $G(\R)$ is not of equal rank. We have to show
\begin{equation}
-1\in W(G,H_f)\Leftrightarrow -1\in W(K,H_f).
\end{equation}
The implication $\Leftarrow$ is obvious.

First assume
$G(\R)=G_1(\C)$, i.e. a complex group, viewed as a real group by
restriction of scalars. 
Then, if $H_1$ is a Cartan subgroup of $G_1$, 
$G=G_1\times G_1, H_f=H_1\times H_1, K=G_1^\Delta$ (embedded diagonally).
It follows immediately that 
$-1\in W(K,H_f)$   if and only if $-1\in W(G,H_f)$.

Finally assume $G(\R)$ is unequal rank, but not complex.
Then 
$G$ is of type $A_n$ ($n\ge 2$),
$D_n$ or $E_6$. But then $-1\in W(G,H_f)$ only in type $D_{2n}$. 
This leaves only the groups locally isomorphic to $SO(p,q)$ with $p,q$
odd and $p+q=0\pmod 4$. 

Let $G(\R)=Spin(p,q)$ with $p+q=4n$. 
It is enough to show $-1\in W(K,H_f)$, 
since $W(K,H_f)$ is, if anything, larger if $G$ is not simply
connected. 
Note that $K$ is connected, of type $B_r\times B_s$, and $-1\in
W(K,H_K)$. 
The only remaining issue is to check that $-1\in W(K,H_f)$; here
$\rank(H_f)=\rank(H_K)+1$. This is a straightforward check.
It essentially comes down to the case of $Spin(3,1)$, for which it is 
easy to see, since $Spin(3,1)\simeq SL(2,\C)$.
\end{proof}

Theorem \ref{t:everyadjoint} is a special case of Corollary \ref{c:practical}. 

\begin{proof}[Proof of Theorem \ref{t:everyadjoint}]
By assumption $-1\in W(G(\C),H(\C))$ so (a) of Corollary \ref{c:practical} holds.
On the other hand (b) holds since every real form of an adjoint group is pure.
By the Corollary every irreducible representation of $G(\R)$ is self-dual.  
\end{proof}

With a little effort we can deduce the following list from Corollary \ref{c:everyalt} and Corollary
\ref{c:practical}.

First assume $G$ is simple, and $G(\R)$ is equal rank.
If $G$ is adjoint it is only a question of whether $-1\in W(G,H_f)$.
If $G$ is simply connected we need to check
if $-1$ is in the Weyl group of the root system of $K$, which is easy, 
for example by the tables \cite[pp. 312-317]{ov}.
This leaves only the intermediate
groups of type $D_n$, which require some case-by-case checking.

In the unequal rank case, we only need to consider 
complex groups, and (up to isogeny) $SO(p,q)$ with
$p,q$ odd. 
\medskip

Suppose $G(\R)$ is simple. Then every irreducible representation of
$G(\R)$ is self-dual if and only if $G(\R)$ is on the following list
(see below for terminology in type $D_{2n}$). 

\begin{enumerate}[(1)]
\item $A_n$: $SO(2,1)$, $SU(2)$  and $SO(3)$.
\item $B_n$: Every real form of the adjoint group, $Spin(2p,2q+1)$
  ($p$ even).
\item $C_n$: Every real form of the adjoint group,  all $Sp(p,q)$.
\item $D_{2n+1}$: none.
\item $D_{2n}$, equal rank:
$Spin(2p,2q)$ ($p,q$ even);
all $SO(2p,2q)$ ($p+q=2n$)
$\overline{SO}(2p,2q)$ ($p,q$ even);
$\overline{SO}^*(4n)$ when disconnected; 
all adjoint groups: $PSO(2p,2q)$ ($p+q=2n$) and  $PSO^*(4n)$.

\item $D_{2n}$, unequal rank: all real forms, i.e. all groups locally isomorphic to
  $SO(2p+1,2q+1)$ ($p+q$ odd).
\item $E_6$: none.
\item $E_7$: Every real form of the adjoint group, the simply  connected compact group.
\item $G_2,F_4,E_8$: every real form.
\item complex groups of type $A_1,B_n,C_n,D_{2n},G_2,F_4,E_7,E_8$ (see
  Remark \ref{r:-1}).
\end{enumerate}
In type $D_{2n}$ let 
$\overline{SO}(4n,\C)$ denote the group $Spin(4n,\C)/A$
where $A\simeq\Z/2\Z$ is not fixed by the outer automorphism of $Spin(4n,\C)$. 
For each $p+q=4n$ this group has a real form denoted 
$\overline{SO}(p,q)$ (locally isomorphic to $SO(p,q)$). 
Also it has two 
subgroups locally isomorphic to $SO^*(4n)$, which we denote $\overline{SO}^*(4n)$. 
These are not isomorphic: one of them is connected, and the other is not.
\section{Frobenius Schur Indicators}

Suppose $\pi$ is an irreducible self-dual representation of a group $G$.
Choosing an isomorphism $T:\pi\rightarrow\pi^*$, $\langle v,w\rangle:=T(v)(w)$ 
is a non-degenerate, invariant, bilinear form, unique up to scalar. 
It is either symmetric or skew-symmetric. The 
Frobenius Schur indicator $\epsilon(\pi)$ of $\pi$ is defined to be $1$ or $-1$, accordingly. 
It is of some interest to compute this invariant. For example see \cite{prasadRamakrishnan}.

Now suppose $G$ is a connected, reductive complex group.
It is well known that if $\pi$ is a self-dual,  finite dimensional representation of $G$ 
 $\epsilon(\pi)$
is given by a particular value of its central character \cite[Ch. IX,
\S7.2, Proposition 1]{bourbakiLieGroupsLieAlgebras7-9}.  
Here is an elementary proof. This is a  refinement of 
one of the proofs of 
\cite[Section 1, Lemma 2]{prasadSelfDual};
we use the Tits group to identify the central element in question.

Let
$\Ch\rho$ be one-half the sum of any set of positive co-roots, and set
\begin{equation}
\label{e:z}
z(\Ch\rho)=\exp(2\pi i\Ch\rho).
\end{equation}
Not only is $z(\Ch\rho)$  central in $G$,  it is
fixed by every automorphism of $G$. 
In particular $z\in Z(G(\R))$ for any real form of $G$.
If it is necessary to specify the group in question we will write
$z(\Ch\rho_G)$. 

\begin{lemma}
\label{l:w0}
Let $w_0$ be the long element of $W(G,H)$ (with respect to any set of positive roots). 
There is a representative $g\in\Norm_G(H)$ of $w_0$ satisfying
$g^2=z(\Ch\rho)$. Furthermore, if $w_0=-1$, this holds for any
representative of $w_0$.
\end{lemma}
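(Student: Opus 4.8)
The plan is to work inside the Tits group (the subgroup of $\Norm_G(H)$ generated by the elements $\tilde s_\alpha = \exp(X_\alpha)\exp(-X_{-\alpha})\exp(X_\alpha)$ attached to the simple roots of the pinning $\caP$), where one has explicit relations for products and squares of the $\tilde s_\alpha$. First I would recall the two standard facts about these generators: $\tilde s_\alpha^2 = \Ch\alpha(-1) = \exp(\pi i \Ch\alpha)$, and a reduced word $w_0 = s_{\alpha_1}\cdots s_{\alpha_N}$ lifts to $g = \tilde s_{\alpha_1}\cdots \tilde s_{\alpha_N}$, an element of $\Norm_G(H)$ depending only on $w_0$ (not the reduced word), by the braid relations. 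Then I would compute $g^2$. Using the fact that $g$ represents $w_0$ and $w_0(\text{positive roots}) = \text{negative roots}$, a telescoping computation (inserting the relations $\tilde s_\alpha h \tilde s_\alpha^{-1} = s_\alpha(h)\,\Ch\alpha(\cdot)$ and peeling generators off both ends) gives $g^2 = \prod_{\alpha > 0} \Ch\alpha(-1) = \exp\!\bigl(\pi i \sum_{\alpha>0}\Ch\alpha\bigr) = \exp(2\pi i \Ch\rho) = z(\Ch\rho)$. This is the first assertion.

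For the second assertion, suppose $w_0 = -1$ in $W(G,H)$. Any two representatives of $w_0$ in $\Norm_G(H)$ differ by an element $t \in H$, so it suffices to show that if $g' = gt$ then $(g')^2 = g^2$. Compute $(gt)^2 = g t g t = g^2 (g^{-1} t g) t = g^2\, w_0(t)\, t$. Since $w_0 = -1$ acts on $H$ by inversion, $w_0(t) = t^{-1}$, hence $(gt)^2 = g^2 t^{-1} t = g^2 = z(\Ch\rho)$. So when $w_0 = -1$ every representative squares to the same central element.

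The main obstacle is the bookkeeping in the computation $g^2 = \prod_{\alpha>0}\Ch\alpha(-1)$: one must be careful that the product is over \emph{all} positive roots (each exactly once) and that the cross-terms from moving the torus elements $\Ch\alpha(-1)$ past the reflections recombine correctly — this is where the hypothesis that $g$ is the \emph{canonical} Tits lift of a reduced expression for $w_0$ is used, via the braid relations guaranteeing well-definedness. I would either cite the standard Tits-group computation (e.g. from the references on pinnings already in the paper, or \cite{springer_book}) for the identity $g^2 = \exp(\pi i \sum_{\alpha>0}\Ch\alpha)$, or give the short inductive argument on the length of $w_0$: writing $w_0 = s_\alpha w_1$ with $\ell(w_1) = \ell(w_0) - 1$ and $\alpha$ simple, one has $g = \tilde s_\alpha g_1$, and $g^2 = \tilde s_\alpha g_1 \tilde s_\alpha g_1 = \tilde s_\alpha^2 \cdot (\tilde s_\alpha^{-1} g_1 \tilde s_\alpha) g_1$; tracking the torus contribution and using that $w_1^{-1}\alpha > 0$ while $w_1$ sends the remaining positive roots to negatives, the induction closes. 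The remaining verification that $\exp(\pi i \sum_{\alpha>0}\Ch\alpha) = \exp(2\pi i \Ch\rho)$ is immediate from the definition of $\Ch\rho$ in \eqref{e:z}, and that $z(\Ch\rho)$ is central (indeed fixed by all automorphisms) was already noted above.
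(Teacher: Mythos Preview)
Your proposal is correct and follows essentially the same approach as the paper: both use the canonical Tits lift $g=\sigma(w_0)$ of $w_0$ and the identity $g^2=z(\Ch\rho)$, and both dispose of the $w_0=-1$ case by the same one-line computation $(hg)^2=hw_0(h)g^2=g^2$. The only difference is that the paper outsources the computation $g^2=z(\Ch\rho)$ to \cite[Lemma 5.4]{contragredient}, whereas you sketch the standard inductive proof of that identity directly.
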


\begin{proof}
We use the Tits group.
Fix a pinning $\caP=(H,B,\{X_\alpha\})$ for $G$
(see Section \ref{s:split}). This defines the Tits group $\T$, a subgroup
of $\Norm_G(H)$ mapping surjectively to $W(G,H)$. Every element $w$ of the
Weyl group has a canonical inverse image  $\sigma(w)\in\T$.
See \cite[Section 5]{contragredient}.

Let $g=\sigma(w_0)$. By \cite[Lemma 5.4]{contragredient}, $g^2=z(\Ch\rho)$.
Any other representative is of the form $hg$ for some $h\in H$.
If $w_0=-1$ then $(hg)^2=h(ghg\inv)g^2=(hh\inv)g^2=g^2$.
\end{proof}

\begin{lemma}
\label{l:schurfinite}
Assume $G$ is a connected, reductive complex group.
Suppose $\pi$ is an irreducible, finite dimensional,  self-dual representation of $G$. 
Let $\chi_\pi$ denote the central character of $\pi$. Then 
\begin{equation}
\epsilon(\pi)=\chi_\pi(z(\Ch\rho)).
\end{equation}
\end{lemma}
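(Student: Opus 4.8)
The plan is to reduce the statement to the case of a highest-weight module and then evaluate the invariant bilinear form on a highest weight vector using the representative of $w_0$ from Lemma~\ref{l:w0}. First I would choose a pinning $\caP=(H,B,\{X_\alpha\})$ and let $\pi$ have highest weight $\lambda\in X^*(H)$ with highest weight vector $v_\lambda$; self-duality forces $\pi^*\simeq\pi$, so $\pi$ also has lowest weight $-\lambda$, which means $w_0\lambda=-\lambda$ where $w_0$ is the long element of $W(G,H)$. Fix the isomorphism $T:\pi\to\pi^*$ and the associated invariant bilinear form $\langle v,w\rangle=T(v)(w)$, unique up to scalar; then $\epsilon(\pi)$ is the sign in $\langle v,w\rangle=\epsilon(\pi)\langle w,v\rangle$.

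Next I would locate where the form is nonzero: by weight considerations $\langle v_\mu,v_\nu\rangle=0$ unless $\mu+\nu=0$, so the pairing is nondegenerate between the $\lambda$-weight space and the $(-\lambda)$-weight space, and in particular $\langle v_\lambda, v_{-\lambda}\rangle\ne0$ for a suitable lowest weight vector $v_{-\lambda}$. The key move is to produce $v_{-\lambda}$ canonically from $v_\lambda$: let $g=\sigma(w_0)\in\Norm_G(H)$ be the Tits-group representative of $w_0$ from Lemma~\ref{l:w0}, so that $g^2=z(\Ch\rho)$, and set $v_{-\lambda}=\pi(g)v_\lambda$, which has weight $w_0\lambda=-\lambda$. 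Now compute $\langle v_\lambda,v_{-\lambda}\rangle$ two ways. Using invariance of the form under $\pi(g)$, $\langle v_\lambda,v_{-\lambda}\rangle=\langle \pi(g)v_\lambda,\pi(g)v_{-\lambda}\rangle=\langle v_{-\lambda},\pi(g^2)v_\lambda\rangle=\chi_\pi(z(\Ch\rho))\langle v_{-\lambda},v_\lambda\rangle$. On the other hand, by definition of the Frobenius Schur indicator, $\langle v_{-\lambda},v_\lambda\rangle=\epsilon(\pi)\langle v_\lambda,v_{-\lambda}\rangle$. Combining, $\langle v_\lambda,v_{-\lambda}\rangle=\epsilon(\pi)\chi_\pi(z(\Ch\rho))\langle v_\lambda,v_{-\lambda}\rangle$, and since this quantity is nonzero we conclude $\epsilon(\pi)\chi_\pi(z(\Ch\rho))=1$, i.e. $\epsilon(\pi)=\chi_\pi(z(\Ch\rho))$ as both are $\pm1$.

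The step I expect to be the main obstacle is making the passage between the two evaluations airtight, specifically checking that $\pi(g)v_\lambda$ is genuinely a nonzero lowest weight vector paired nontrivially with $v_\lambda$, and that the invariance identity $\langle \pi(g)u,\pi(g)w\rangle=\langle u,\pi(g^2)w\rangle$ is applied correctly (it uses $G$-invariance of the bilinear form, which holds because $T$ intertwines $\pi$ and $\pi^*$). One should also be slightly careful that $\Ch\rho$ need not lie in $X^*(H)$ when $G$ is not adjoint, but $z(\Ch\rho)=\exp(2\pi i\Ch\rho)$ is nonetheless a well-defined central element and $\chi_\pi$ evaluates on it, so the computation goes through; the cited fact $g^2=z(\Ch\rho)$ from Lemma~\ref{l:w0} via \cite[Lemma 5.4]{contragredient} is exactly what ties the central element to $w_0$. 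The reductive (as opposed to semisimple) case adds nothing new, since the central torus acts by a character and contributes consistently to both sides.
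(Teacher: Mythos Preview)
Your proof is correct and follows essentially the same path as the paper's: both choose the Tits representative $g$ of $w_0$ with $g^2=z(\Ch\rho)$, take a highest weight vector $v\in V_\lambda$, observe that $\pi(g)v\in V_{-\lambda}$ pairs nontrivially with $v$ (since $V_{\pm\lambda}$ are one-dimensional and the form is nondegenerate on $V_\lambda\times V_{-\lambda}$), and then combine $G$-invariance with the (skew-)symmetry of the form to obtain $\epsilon(\pi)=\chi_\pi(g^2)$. One small wording slip to clean up: the ``invariance identity'' you quote in your last paragraph, $\langle \pi(g)u,\pi(g)w\rangle=\langle u,\pi(g^2)w\rangle$, is not a general identity---invariance says $\langle \pi(g)u,\pi(g)w\rangle=\langle u,w\rangle$---but your actual chain of equalities in the second paragraph uses invariance correctly together with the substitution $\pi(g)v_{-\lambda}=\pi(g^2)v_\lambda$, so the argument is fine.
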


\begin{proof}
For any vectors $u,w$ in the space $V$ of $\pi$ we have
\begin{subequations}
\renewcommand{\theequation}{\theparentequation)(\alph{equation}}  
\label{e:schur}
\begin{equation}
\langle u,w\rangle=\epsilon(\pi)\langle w,u\rangle.
\end{equation}
Suppose $g\in G,g^2\in Z(G)$, and $v\in V$. Set $u=\pi(g^2)v,w=\pi(g)v$: 
\begin{equation}
\begin{aligned}
\chi_\pi(g^2)\langle v,\pi(g)v\rangle&=
\langle \pi(g^2)v,\pi(g)v)\quad\text{(since $g^2$ is central)}\\
&=\langle \pi(g)v,v\rangle\quad\text{(by invariance)}\\
&=\epsilon(\pi)\langle v,\pi(g)v\rangle\quad\text{(by (a)).}
\end{aligned}
\end{equation}
We conclude
\begin{equation}
g^2\in Z(G),\, \langle v,\pi(g)v\rangle\ne 0\Rightarrow \epsilon(\pi)=\chi_\pi(g^2).
\end{equation}
\end{subequations}

Fix a Cartan subgroup $H$, and for $\lambda\in X^*(H)$ write
$V_\lambda$ for the corresponding weight space.  
It is easy to see $\langle V_\lambda,V_{-\lambda}\rangle\ne 0$. 

Let $\lambda$ be the highest weight, so $V_\lambda$ is
one-dimensional. Let $w_0$ be the long element of the Weyl group. 
Then $\pi^*$ has highest weight $-w_0\lambda$; since $\pi$ is
self-dual this implies $-\lambda=w_0\lambda$. 

Choose $g\in \Norm_G(H)$ as in  Lemma \ref{l:w0}, so $g^2=z(\Ch\rho)$,  and $0\ne v\in V_\lambda$. 
Then $\pi(g)(v)\in V_{-\lambda}$. Since $V_{\pm\lambda}$ are
one-dimensional $\langle v,\pi(g)v\rangle\ne 0$, so apply \eqref{e:schur}(c).
\end{proof}

We now consider the Frobenius Schur indicator for infinite dimensional
representations. The basic technique is the following elementary observation, 
which appears in \cite{prasadRamakrishnan}. 

Suppose $H\subset G$ are groups,  $\pi$ is a self-dual representation
of $G$, $\pi_H$ is a self-dual representation of $H$, and 
$\pi_H$ occurs with multiplicity one in $\pi|_H$.
Then $\epsilon(\pi)=\epsilon(\pi_H)$.
We first apply this to $G$ and $K$, and later to $K$ and its identity component.

The next  Lemma is a special case of the main result of this section
(Theorem \ref{t:schur}), but it is worth stating separately since 
it clearly illustrates the main idea.

We continue to assume $G$ is a connected reductive complex group. 
Fix a real form $G(\R)$,  a corresponding Cartan
involution $\theta$, and let $K=G^\theta$.

\begin{lemma}
\label{l:schurconnected}
Suppose every irreducible representation of $G(\R)$ is
self-dual.
Also assume $G(\R)$ is
connected. 
If $\pi$ is an irreducible representation then
$\epsilon(\pi)=\chi_\pi(z(\Ch\rho))$. 
\end{lemma}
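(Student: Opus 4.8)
The plan is to reduce the (possibly infinite dimensional) representation $\pi$ of $G(\R)$ to its lowest $K(\R)$-type $\mu$, apply the finite dimensional Lemma~\ref{l:schurfinite} to the group $K$, and then reconcile the central elements $z(\Ch\rho_K)$ and $z(\Ch\rho_G)$ by means of Lemma~\ref{l:w0}. Since $G(\R)$ is connected, $K=K(\C)$ is connected reductive, so Lemma~\ref{l:schurfinite} is available for it, and by Corollary~\ref{c:every}(a) every irreducible representation of $K(\R)$ is self-dual. Also, by Corollary~\ref{c:everyalt} the hypothesis gives $-1\in W(K,H_f)$, hence $-1\in W(G,H_f)$.

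First I would carry out the reduction. By \cite[Theorem 1.2]{khat}, $\pi$ has a unique lowest $K(\R)$-type $\mu$, occurring with multiplicity one in $\pi|_{K(\R)}$; since the lowest $K(\R)$-type of $\pi^*$ is $\mu^*$ and $\pi\simeq\pi^*$, uniqueness forces $\mu\simeq\mu^*$, so $\mu$ is self-dual. The elementary multiplicity-one observation recalled above (from \cite{prasadRamakrishnan}), applied to $K(\R)\subset G(\R)$, then gives $\epsilon(\pi)=\epsilon(\mu)$, while Lemma~\ref{l:schurfinite} applied to $K$ gives $\epsilon(\mu)=\chi_\mu(z(\Ch\rho_K))$.

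Next I would identify the two central elements and push the central character back to $\pi$. Choose $g\in\Norm_K(H_f)$ representing $-1\in W(K,H_f)$; then $g$ inverts all of $H_f$, hence also $H_K=H_f\cap K$, so $g$ represents the long element $-1$ of $W(K,H_K)$ and, since $g\in\Norm_G(H_f)$, also the long element $-1$ of $W(G,H_f)$. Applying the ``furthermore'' clause of Lemma~\ref{l:w0} first to $K$ and then to $G$ gives $g^2=z(\Ch\rho_K)$ and $g^2=z(\Ch\rho_G)$, whence $z(\Ch\rho_K)=z(\Ch\rho_G)$. Finally $z(\Ch\rho_G)\in Z(G(\R))$ is central in $K$ and acts on the subrepresentation $\mu$ of $\pi|_K$ by the same scalar as on $\pi$, so $\chi_\mu(z(\Ch\rho_G))=\chi_\pi(z(\Ch\rho_G))$. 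Chaining the equalities,
\[
\epsilon(\pi)=\epsilon(\mu)=\chi_\mu(z(\Ch\rho_K))=\chi_\mu(z(\Ch\rho_G))=\chi_\pi(z(\Ch\rho_G)),
\]
as required.

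The step requiring the most care is the identification $z(\Ch\rho_K)=z(\Ch\rho_G)$: one cannot argue $\Ch\rho_K=\Ch\rho_G$ (they differ by the half-sum of the noncompact positive coroots), and one must respect the dangerous bend of Remark~\ref{r:dangerous} about $W(K,H_K)$ versus $W(K,H_f)$. What makes it go through is that the hypothesis furnishes a single element $g$ that inverts the entire fundamental Cartan $H_f$ and lies in $K$, so the Tits-group computation of Lemma~\ref{l:w0} applies verbatim to both $G$ and $K$. One can also bypass $z(\Ch\rho_K)$ entirely: take $v$ a highest weight vector for $\mu$ inside the multiplicity-one, $K$-stable $\mu$-isotypic subspace of $\pi$; since $\mu\simeq\mu^*$ the invariant form is nondegenerate on that subspace, $\pi(g)v$ spans the extreme weight space of the opposite weight, so $\langle v,\pi(g)v\rangle\ne 0$, and \eqref{e:schur}(c) with $g^2=z(\Ch\rho_G)$ yields the result directly.
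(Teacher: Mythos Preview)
Your argument is correct and follows the same route as the paper: reduce to a lowest $K$-type via multiplicity one, apply Lemma~\ref{l:schurfinite} to the connected group $K$, and use a single representative $g\in K$ of $-1$ on $H_f$ together with the ``furthermore'' clause of Lemma~\ref{l:w0} (for both $K$ and $G$) to get $z(\Ch\rho_K)=g^2=z(\Ch\rho_G)$. One small quibble: lowest $K$-types need not be unique (only of multiplicity one), so your ``uniqueness forces $\mu\simeq\mu^*$'' step is not quite right as stated---but it is also unnecessary, since you already observed via Corollary~\ref{c:every}(a) that every irreducible representation of $K(\R)$ is self-dual, which is exactly how the paper argues that $\mu$ is self-dual.
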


\begin{proof}
By Corollary \ref{c:everyalt}, 
the self-duality assumption implies $-1\in
W(K,H_f)$. So $-1\in W(K,H_K)$ and this implies every $K$-type is self-dual (since
$G(\R)$, and therefore  $K$, is
connected).

Let $\mu$ be a lowest $K$-type of $\pi$. 
Then $\mu$ has multiplicity one, and is self-dual,
so by the comment above $\epsilon(\pi)=\epsilon(\mu)$.
By Lemma \ref{l:schurfinite}, $\epsilon(\mu)=\chi_\mu(z(\Ch\rho_K))$,
where  $z(\Ch\rho_K)$ is defined by \eqref{e:z} applied to $K$.
Write $\Ch\rho_G$ in place of  $\Ch\rho$.
Let $g\in\Norm_G(H_f)$ be a representative of $-1\in W(G,H_f)$, so by Lemma \ref{l:w0}
$g^2=z(\Ch\rho_G)$.
Now view $g$ as a representative of $-1\in W(K,H_f)$, in which case 
(by Lemma \ref{l:w0} applied to  to $K$) we see $g^2=z(\Ch\rho_K)$.

Therefore 
$z(\Ch\rho_G)=z(\Ch\rho_K)$, and since $z(\Ch\rho_G)\in Z(G)$, $\chi_\mu(z(\Ch\rho_G))=\chi_\pi(z(\Ch\rho_G))$, independent 
of $\mu$. Thus
$$
\epsilon(\pi)=\epsilon(\mu)=\chi_\mu(z(\Ch\rho_K))=\chi_\mu(z(\Ch\rho_G))=\chi_\pi(z(\Ch\rho_G)).
$$
\end{proof}

A crucial aspect of the proof is that, for $K$ connected, 
$-1\in W(K,H_f)$ implies  $z(\Ch\rho_G)=z(\Ch\rho_K)$.
We need the surprising fact that
this is true without the first assumption.

\begin{lemma}
\label{l:zrhoK}
Suppose $G$ is a connected, reductive complex group, $\theta$ is a Cartan involution,
$K=G^\theta$ and  $H_f$ is a fundamental Cartan subgroup. 
Assume $-1\in W(K,H_f)$. Then $z(\Ch\rho)=z(\Ch\rho_K)$.
\end{lemma}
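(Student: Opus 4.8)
The plan is to manufacture a single element whose square we compute in two ways. First note that $-1\in W(K,H_f)$ forces $-1\in W(G,H_f)$, so the long element $w_0$ of $W(G,H_f)$ equals $-1$; choose $g\in\Norm_K(H_f)$ with $\int(g)|_{H_f}=-1$. Since $g$ represents $w_0=-1$, Lemma \ref{l:w0} gives $g^2=z(\Ch\rho)$, so it remains to prove $g^2=z(\Ch\rho_K)$. Because $g\in K=G^\theta$, the automorphism $\int(g)$ commutes with $\theta$, hence preserves $K_0$ and $H_f\cap K$, acting by $-1$ on the maximal torus $H_{K,0}$ of $K_0$.

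I would then split into two cases. If $-1\in W(K_0,H_{K,0})$ (which holds whenever $K$ is connected, but also, e.g., for $SO(2p+1,2q+1)$, where $K_0$ has type $B_p\times B_q$), then $\int(g)|_{K_0}$ is inner, say $\int(g)|_{K_0}=\int(k_0)$ with $k_0\in\Norm_{K_0}(H_{K,0})$ representing $-1$; by Lemma \ref{l:w0} applied to $K_0$, $k_0^2=z(\Ch\rho_K)$. Put $d=k_0^{-1}g$, so $d$ centralizes $K_0$; then $d\in\Cent_G(K_0)\subseteq\Cent_G(H_{K,0})=H_f$, hence $d\in H_f\cap K$ and $d$ commutes with $k_0$. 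Thus $g=k_0d$ and $g^2=k_0^2d^2=z(\Ch\rho_K)d^2$; finally $\int(g)$ inverts $d\in H_f$, while $gdg^{-1}=k_0dk_0^{-1}=d$, so $d^2=1$ and $g^2=z(\Ch\rho_K)$, as desired.

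The remaining case, $-1\notin W(K_0,H_{K,0})$, is where the real work lies; it forces $G(\R)$ to be of equal rank and $K_0$ to have a simple factor of type $A_n$ ($n\ge2$), $D_{2n+1}$ or $E_6$. Here $\int(g)|_{K_0}$ is an outer automorphism, and I would write $\int(g)|_{K_0}=\int(k_0)\circ\delta_K$ relative to a pinning $\caP_K$ of $K_0$, with $\delta_K$ the distinguished automorphism of $K_0$ in the outer class of the opposition involution and $k_0$ representing the long element $w_0^{K_0}$ of $W(K_0,H_{K,0})$, arranging $k_0=\sigma_{K_0}(w_0^{K_0})$ (the canonical Tits element, so $k_0^2=z(\Ch\rho_K)$ by Lemma \ref{l:w0}). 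Setting $d=k_0^{-1}g$, so $\int(d)|_{K_0}=\delta_K$ and $g=k_0d$, and using that $\delta_K$ is distinguished and fixes $w_0^{K_0}$ (whence $dk_0d^{-1}=\delta_K(\sigma_{K_0}(w_0^{K_0}))=\sigma_{K_0}(w_0^{K_0})=k_0$), one gets $g^2=k_0^2d^2=z(\Ch\rho_K)d^2$. Everything thus comes down to showing $d^2=1$.

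The hard part will be exactly this last point. Since $\delta_K^2=1$ one has $d^2\in\Cent_G(K_0)\cap H_f$, a finite group ($Z(K_0)$ in the equal rank case), and a short argument using $\int(g)|_{H_f}=-1$ gives $d^4=1$; the issue is upgrading this to $d^2=1$. One route is to replace $d$ by $td$ with $t\in H_{K,0}$, which multiplies $d^2$ by $t\,\delta_K(t)$; as $t\mapsto t\,\delta_K(t)$ surjects onto the (divisible) identity component of $H_{K,0}^{\delta_K}$, it suffices to control the $\delta_K$-action on $Z(K_0)$. More concretely, by the classification in Section \ref{s:every} this case consists only of a few families of $D_{2n}$- and $E_7$-type real forms — the groups locally isomorphic to $SO(2p,2q)$ with $p,q$ odd and their $\overline{SO}$-variants, and the adjoint real forms of $E_7$ with $K_0$ of type $A_7$ or $E_6$ — for each of which one simply exhibits an involution realizing $\delta_K$; for instance for $SO(2p,2q,\R)$ take $d=(\diag(1,\dots,1,-1),\diag(1,\dots,1,-1))\in S(O(2p,\C)\times O(2q,\C))$, which induces the opposition automorphism on each $D$-factor and has $d^2=1$, so that $g=k_0d$ gives $g^2=z(\Ch\rho_K)$ and the proof is complete.
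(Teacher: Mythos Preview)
Your approach differs substantially from the paper's, and the two are in some sense orthogonal.

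Your Case~1 ($-1\in W(K_0,H_{K,0})$) is correct and rather clean: it extends the $K$-connected computation from the proof of Lemma~\ref{l:schurconnected} to disconnected $K$, and it disposes of all unequal-rank cases uniformly. One line you skip deserves mention: when $-1\in W(K_0,H_{K,0})$, the automorphism $\int(g)|_{K_0}$ is automatically inner, because composing with an inner representative of $-1$ gives an automorphism fixing $H_{K,0}$ pointwise, and a distinguished automorphism of $K_0$ whose Cartan action lies in $W(K_0,H_{K,0})$ must be trivial. With that, your $d^2=1$ argument goes through.

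Case~2 is where the proposal is genuinely incomplete. Your enumeration of the groups arising is wrong: beyond the $D_{2n}$ and $E_7$ forms you list, the condition ``equal rank, $-1\in W(K,H_f)$, $-1\notin W(K_0,H_{K,0})$'' also catches $SO(2p,2q+1)$ with $p$ odd (type $B$, adjoint), $PSp(2n,\R)$ for all $n\ge 2$ (type $C$, adjoint), and $PSO^*(4n)$ and $PSO(2p,2q)$ with $p,q$ odd (type $D_{2n}$, adjoint). The Hermitian cases are easy to overlook: whenever $K_0$ has a nontrivial central torus, $W(K_0,H_{K,0})$ acts trivially on it, so $-1\notin W(K_0,H_{K,0})$ regardless of the semisimple type. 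So the case-by-case program, even if each instance is as routine as your $SO(2p,2q)$ example, is considerably longer than you suggest and is only sketched.

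The paper handles the equal-rank case uniformly and avoids this entirely. It writes $\theta=\int(x)$ with $x=\exp(\pi i\Ch\lambda_j)$ coming from the Kac classification of inner involutions, proves the root-system identity $\Ch\rho_G-\Ch\rho_K=c\Ch\lambda_j$ for an integer $c$, so that $z(\Ch\rho_G)/z(\Ch\rho_K)=x^{2c}$; then the hypothesis $-1\in W(K,H_f)$ is exactly the purity condition $x^2=1$ from \eqref{e:practical}, giving $x^{2c}=1$ with no case analysis at all. The unequal-rank cases are then reduced to connected $K$ (complex groups directly, the remaining $D_{2n}$ cases by passing to the simply connected cover). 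So the paper is uniform on equal rank and does a short case-check on unequal rank, whereas you are uniform on unequal rank (and much of equal rank) but face a longer-than-expected case-check on the residual equal-rank groups.
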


This is a bit subtle, as 
 a simple example shows.

\begin{example}
Let $G(\R)=SL(2,\R)$, so $K=SO(2,\C)$.
Then $-1\not\in W(K,H_f)$, and $-I=z(\Ch\rho)\ne z(\Ch\rho_K)=I$. 

On the other hand suppose $G(\R)=PSL(2,\R)=SO(2,1)$. 
Then $K=O(2,\C)$, so $-1\in W(K,H_f)$, and now
$I=z(\Ch\rho)=z(\Ch\rho_K)$. 
\end{example}

\begin{proof}
We may  assume $G(\R)$ is simple.

First assume $G(\R)$ is equal rank.
Recall (see the discussion in Section \ref{s:split}) $K=\Cent_G(x)$
for  some $x\in H_f$. We will show $x$ is of a particular form.
We need a short digression on the 
Kac classification of real forms. For details see
\cite{ov},\cite{helgason_book}.

Let $\wt D$ be the 
extended Dynkin diagram  for $G$, with nodes  ${0,\dots,
m}$; roots $\alpha_0,\dots, \alpha_m$ ($-\alpha_0$ is the highest
root); and  labels $n_0=1,n_1,\dots, n_m$ (the multiplicity of the
root in the highest root). 
The Dynkin diagram of $K$ is 
obtained from $\wt D$ by deleting 
node $j$ with label $2$, or nodes ${j,k}$ with label $1$. 
In the second case, without loss of generality, we may assume $k=0$, 
so both cases may be combined, as specifying
a single node $j$ with label $n_j=1$ or
$2$.

Let $\Ch\lambda_j$ be the $j^{th}$ fundamental weight for $G$. 
Then we can take
$x=\exp(\pi i\Ch\lambda_j)$.

\begin{subequations}
\renewcommand{\theequation}{\theparentequation)(\alph{equation}}  
Now set $N=\sum_{i=0}^m n_i$ and  let
\begin{equation}
c=
\begin{cases}
\frac N2&n_j=2\\
N-1&n_j=1.  
\end{cases}
\end{equation}
Except in type $A_{2n}$, which is ruled out since $-1\in W(G,H_f)$, $N$ is even, so $c\in \Z$.

It is  an exercise in root  systems to see that
\begin{equation}
\Ch\rho_G-\Ch\rho_K=c\Ch\lambda_j.
\end{equation}
(For  $i\ne 0,j$, both sides are $0$ when paired with 
$\alpha_i$, so this amounts to computing the pairing with 
$\alpha_0$ and $\alpha_j$.)
Therefore
\begin{equation}
x=\exp(\frac{\pi i}c(\Ch\rho_G-\Ch\rho_K)).
\end{equation}
\end{subequations}
Then  $x^{2c}=z(\Ch\rho_G)/z(\Ch\rho_K)$.

By  \eqref{e:practical} we have:
\begin{equation}
-1\in W(K,H_f)\Leftrightarrow x^2=1\Rightarrow x^{2c}=1
\Rightarrow z(\Ch\rho_G)=z(\Ch\rho_K).
\end{equation}

A similar, but more elaborate, argument holds in the unequal rank
case. 
Instead, we proceed in a more case-by-case fashion.
If $G(\R)$ is complex, then $K$ is connected, and we have already
treated this case (see the proof of Lemma \ref{l:schurconnected}).
Since $-1\in W(K,H_f)$ every representation of $G(\R)$ is self-dual. 
Consulting the list at the end of the previous section, this leaves
only type $D_{2n}$.
If $G$ is simply connected then by a case-by-case check (assuming unequal rank),
$-1\in W(K,H_f)$, and $K$ is connected, so again we have
$z(\Ch\rho_G)=z(\Ch\rho_K)$. 
The result is then true {\it a fortiori} if $G$ is not simply
connected. This completes the proof.
\end{proof}

We also need a generalization of Lemma \ref{l:schurfinite}.

\begin{lemma}
\label{l:schurfinite2}
Assume $G$ is a connected, reductive complex group.
Let $G^\dagger=G\rtimes\langle\delta\rangle$ where $\delta^2\in Z(G)$ and $\delta$
acts on $G$ by a Chevalley involution.

Every irreducible finite dimensional representation $\pi^\dagger$ of 
$G^\dagger$ is self-dual, and 
if $\pi$ is an irreducible constituent of $\pi^\dagger|_G$, then
\begin{equation}
\epsilon(\pi^\dagger)=
\begin{cases}
\epsilon(\pi)&\pi\simeq\pi^*\\
\chi_\pi(\delta^2)&\pi\not\simeq\pi^*.
\end{cases}
\end{equation}
\end{lemma}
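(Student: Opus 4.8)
The plan is to first settle self-duality of $\pi^\dagger$, then to split into the two cases according to whether $\pi\simeq\pi^*$, reducing each to Lemma \ref{l:schurfinite} applied either to $G^\dagger$ or to $G$. For self-duality: since $\delta$ acts on $G$ by a Chevalley involution, for any finite-dimensional representation $\pi$ of $G$ we have $\pi^\delta\simeq\pi^*$ (as recalled in the introduction). Hence for any irreducible $\pi^\dagger$ of $G^\dagger$, restricting to $G$, twisting by $\delta$, and then inducing back shows $(\pi^\dagger)^*$ and $\pi^\dagger$ have the same restriction to $G$ up to the $\delta$-twist, which $\pi^\dagger$ already absorbs; a short Frobenius-reciprocity/Clifford-theory argument then gives $\pi^\dagger\simeq(\pi^\dagger)^*$.

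For the case $\pi\simeq\pi^*$: here $\pi^\dagger|_G$ is irreducible (it equals $\pi$, since $\pi^\delta\simeq\pi^*\simeq\pi$ means $\delta$ stabilizes $\pi$, so $\pi^\dagger$ has dimension $\dim\pi$ unless it splits — one checks which case occurs, but in either case the invariant bilinear form on $\pi^\dagger$ restricts to a nonzero invariant form on $\pi$, necessarily of the same symmetry type by uniqueness up to scalar on the irreducible $G$-module $\pi$). Therefore $\epsilon(\pi^\dagger)=\epsilon(\pi)$ directly from the definition. For the case $\pi\not\simeq\pi^*$: then $\pi^\delta\simeq\pi^*\not\simeq\pi$, so $\pi^\dagger|_G\simeq\pi\oplus\pi^*$, and $\pi$ occurs with multiplicity one. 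I would run the argument of Lemma \ref{l:schurfinite} inside $G^\dagger$: pick a highest weight vector $v\in\pi\subset\pi^\dagger$ for a Cartan $H$ of $G$; the element $\delta$ itself (or $h\delta$ for suitable $h\in H$) plays the role of the long Weyl element, since conjugation by $\delta$ acts on $H$ by inversion (Chevalley involution) and carries the weight-$\lambda$ line of $\pi$ to the weight-$(-\lambda)$ line, which lives in the $\pi^*$-summand. Then $\langle v,\pi^\dagger(\delta)v\rangle\neq0$ pairs the $\pi$ and $\pi^*$ summands nondegenerately, and applying \eqref{e:schur}(c) with $g=\delta$ gives $\epsilon(\pi^\dagger)=\chi_{\pi^\dagger}(\delta^2)=\chi_\pi(\delta^2)$, using that $\delta^2\in Z(G)$ acts on all of $\pi^\dagger|_G$ by the scalar $\chi_\pi(\delta^2)$.

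The main obstacle I expect is the bookkeeping in the case $\pi\simeq\pi^*$: one must correctly determine whether $\pi^\dagger|_G$ stays irreducible or decomposes as two non-isomorphic copies of $\pi$ (this depends on whether the $\delta$-action can be intertwined on $\pi$, i.e. on a sign/cohomological obstruction involving $\delta^2$ acting as $\chi_\pi(\delta^2)=\pm1$), and to check that in the decomposable subcase the formula $\epsilon(\pi^\dagger)=\epsilon(\pi)$ still holds — which it does, because the invariant form on $\pi^\dagger$ must pair each $G$-irreducible summand with a copy of its dual, and here the dual is again $\pi$, preserving the symmetry type. Everything else is a routine transcription of the highest-weight/long-element computation from Lemma \ref{l:schurfinite}, now with $\delta$ supplying the normalizer element, together with the observation that $z(\Ch\rho)$ is fixed by the Chevalley involution so no discrepancy between $\chi_\pi$ and $\chi_{\pi^*}$ arises on central elements.
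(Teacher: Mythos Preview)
Your proposal is correct and follows essentially the same route as the paper: in the case $\pi\not\simeq\pi^*$ you run the computation \eqref{e:schur}(c) from Lemma~\ref{l:schurfinite} with $\delta$ playing the role of the normalizer element sending the $\lambda$-weight line to the $(-\lambda)$-weight line, and in the case $\pi\simeq\pi^*$ you restrict the invariant form to the irreducible $G$-summand. The paper's proof is the same, only more terse (it says ``If $\pi\simeq\pi^*$ the result is clear'' and then carries over the highest-weight argument with $\delta$ in the other case).

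One remark: the ``main obstacle'' you flag is not actually an obstacle. Since $G^\dagger/G$ has order two, Clifford theory forces $\pi^\dagger|_G$ to be either irreducible or a direct sum of two \emph{non-isomorphic} irreducibles; the possibility $\pi^\dagger|_G\simeq\pi\oplus\pi$ never arises for irreducible $\pi^\dagger$. So once $\pi\simeq\pi^\delta\simeq\pi^*$, the restriction $\pi^\dagger|_G$ is automatically irreducible and equal to $\pi$, and there is no ``decomposable subcase'' to check.
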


\begin{proof}
The restriction of $\pi^\dagger$ is irreducible if and only if
$\pi\simeq\pi^\delta$. 
Since $\delta$ acts by the Chevalley involution, this is equivalent to
$\pi\simeq\pi^*$. 

If $\pi\simeq\pi^*$ the result is clear. Otherwise, let $\lambda$ be
the highest weight of $\pi$. Then $\pi^\delta$ has extremal weight
$-\lambda$, i.e. highest weight $-w_0\lambda$ where $w_0$ is the long
element of the Weyl group. Since $\pi\not\simeq\pi^*$,
$-w_0\lambda\ne\lambda$, so the $\lambda$-weight space of
$\pi^\dagger$ is one-dimensional. The proof of Lemma
\ref{l:schurfinite} now carries through using $\delta$, which interchanges
the $\lambda$ and $-\lambda$ weight spaces of $\pi^\dagger$.
\end{proof}

We need to consider finite dimensional representations of the possibly
disconnected group $K=G^\theta$. These groups are not badly
disconnected, for example the component group is an elementary abelian two-group
(this follows from \cite[Proposition 4.42(a)]{knappvogan},
and the fact that it is true for real tori),
and we
need the following property of their representations. 

\begin{lemma}
\label{l:multfree}
Let $\mu$ be an irreducible, finite-dimensional,  representation of $K$. Then the restriction of $\mu$ to 
$K_0$ is multiplicity free.  
\end{lemma}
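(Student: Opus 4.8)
The plan is to reduce the statement to the known fact that $K_0$ itself has multiplicity-free restrictions to the relevant subgroups, after controlling how the disconnectedness of $K$ interacts with its representations. The key structural input is that $K/K_0$ is an elementary abelian $2$-group (stated just before the lemma), so every irreducible representation $\mu$ of $K$ is induced from a one-dimensional-twist of an irreducible representation of the preimage of a subgroup of $K/K_0$, and ultimately the Clifford theory attached to the normal subgroup $K_0 \trianglelefteq K$ is especially simple.

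First I would set up Clifford theory for the pair $K_0 \trianglelefteq K$: choose an irreducible constituent $\pi_0$ of $\mu|_{K_0}$, let $S = \Stab_{K/K_0}(\pi_0)$ be its stabilizer in the component group (acting via $\Int$), and recall that $\mu|_{K_0}$ is a sum of the $K$-conjugates of $\pi_0$, each occurring with the same multiplicity $m$, where $m$ equals the multiplicity of $\pi_0$ in the restriction to $K_0$ of an irreducible representation of the preimage $K_S$ of $S$. So the entire problem reduces to showing $m = 1$, i.e. that for a connected reductive complex group $K_0$ acting on itself by a finite group $A$ of automorphisms that fix the isomorphism class of $\pi_0$ and with $A$ an elementary abelian $2$-group, the irreducible representation $\pi_0$ extends (up to a one-dimensional character twist) to $K_0 \rtimes A$, equivalently that the obstruction class in $H^2(A, \C^\times)$ vanishes.

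The main step — and the main obstacle — is the vanishing of this Schur multiplier obstruction. Here I would use that $\pi_0$ is a highest-weight representation: fix a $\theta$-stable pinning of $K_0$ that the automorphism group $A$ preserves up to inner automorphisms, and observe that since each $a \in A$ fixes $[\pi_0]$ it must (after composing with an inner automorphism) fix the highest weight $\lambda$ of $\pi_0$, hence can be arranged to fix the whole pinning and the highest weight line; then one gets a canonical extension of the action of each such $a$ to the representation space by declaring it to act by a fixed scalar on the (one-dimensional) highest-weight line and transporting via the pinning-preserving structure. Because $A$ is an elementary abelian $2$-group and acts by honest automorphisms preserving a common pinning, these extensions can be chosen to commute up to signs, and the signs can be absorbed into a one-dimensional character of $A$ — here the fact that $A$ is a $2$-group is exactly what makes $H^2(A,\C^\times)$ manageable and the cocycle a coboundary. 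This gives a genuine extension of $\pi_0$ to $K_S$, so $m=1$.

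Finally, with $m = 1$ in hand, $\mu|_{K_0} = \bigoplus_{a \in (K/K_0)/S} \pi_0^a$ with each summand distinct (they are genuinely non-isomorphic since we passed to the stabilizer), so the restriction is multiplicity free, completing the proof. I would expect the write-up to be short modulo the Clifford-theory bookkeeping; the one place to be careful is checking that the automorphisms in $A$ can simultaneously be normalized to preserve a single pinning of $K_0$ — this uses that inner automorphisms act simply transitively on pinnings together with the $2$-group structure of $K/K_0$.
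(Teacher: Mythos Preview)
Your Clifford-theoretic reduction matches the paper's exactly: both reduce to showing that an irreducible constituent $\mu_0$ of $\mu|_{K_0}$ extends to its inertia group $K_1=\Stab_K(\mu_0)$. The gap is in how you propose to kill the Mackey obstruction.

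The assertion that ``$A$ is an elementary abelian $2$-group, so the cocycle is a coboundary'' is false as a general principle: $H^2\bigl((\Z/2\Z)^n,\C^\times\bigr)\cong(\Z/2\Z)^{\binom{n}{2}}$, which is nontrivial once $n\ge 2$. More concretely, suppose you choose pinning-preserving lifts $\tilde a\in K_1$ of each $a\in A$ (these exist, but are unique only modulo $Z(K_0)$ --- inner automorphisms act transitively, not simply transitively, on pinnings). Normalizing each intertwiner $T_a$ to act by $1$ on the highest-weight line $V_\lambda$ does force $T_aT_b=T_{ab}$, but the obstruction to getting a genuine representation of $K_1$ is then $\chi_{\mu_0}\bigl(\tilde a\,\tilde b\,(\widetilde{ab})^{-1}\bigr)$ with $\tilde a\,\tilde b\,(\widetilde{ab})^{-1}\in Z(K_0)$, and nothing you have said forces this $\{\pm1\}$-valued cocycle on $A$ to be a coboundary. ``Absorbing signs into a character of $A$'' is exactly the statement that the class is trivial in $H^2(A,\C^\times)$, which is what needs proof.

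The paper handles precisely this point by proving a structural sub-lemma: one can choose representatives $x_1,\dots,x_n$ of $K/K_0$ that normalize a fixed Borel and Cartan of $K_0$ \emph{and commute with one another}. Commutativity of the lifts is what trivializes the cocycle, and it is obtained by reducing to $G(\R)$ simple and observing case-by-case that $|K/K_0|\le 2$ except for $PSO(2n,2n)$, which is checked directly. With commuting lifts in hand, the paper invokes Vogan's large Cartan subgroups to produce the extension $\mu_1$. So your outline is sound up to the vanishing of the obstruction; to close it you need this commuting-lifts input (or something equivalent) rather than the $2$-group hypothesis alone. The place you flagged as ``the one place to be careful'' --- simultaneously normalizing to a single pinning --- is actually the easy part; the hard part is what happens in $Z(K_0)$ afterward.
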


\begin{proof}
Suppose $\mu_0$ is an irreducible summand of $\mu|_{K_0}$,
and let $K_1=\Stab_{K}(\mu_0)$.
It is enough to show that $\mu_0$ extends to an irreducible
representation $\mu_1$ of $K_1$. For then, by Mackey theory,
$\Ind_{K_1}^K(\mu_1)$ 
is irreducible, so isomorphic to $\mu$, and restricts to
the sum of the distinct irreducible representations
$\{\pi_0^x\mid x\in S\}$, where  $S$ is a set of representatives of $K/K_1$.

Choose Cartan and Borel subgroups $T\subset B_{K_0}$ of $K_0$.
(We can arrange that $B_{K_0}=B\cap K_0$ and $T=H\cap K_0$).   

\begin{lemma}
We can choose elements $x_1,\dots, x_n\in K$ such that:
\begin{enumerate}[(1)]
\item $K=\langle K_0,x_1,\dots, x_n\rangle$;
\item $x_i$ normalizes $B_{K_0}$ and $T$;
\item The $x_i$ commute with each other.
\end{enumerate}
\end{lemma}

\begin{remark}
By a standard argument it is  easy to arrange  (1) and (2), the main
point is (3).
Alternatively, it is well known that we could instead choose the $x_i$ to satisfy (1),
(3) and  that
each $x_i$ has order $2$.
It would be interesting to prove that one can satisfy all four
conditions simultaneously, and perhaps even that conjugation by $x_i$
is a distinguished involution of $K_0$.
\end{remark}

\begin{proof}
Choose $x\in K\backslash K_0$.   Then conjugation by $x$ takes
$B_{K_0}$ to another Borel subgroup, which we may conjugate back to
$B_{K_0}$. So after replacing $x$ with another element in the same
coset of $K_0$ we may assume $x$ normalizes $B_{K_0}$.
Conjugating again by an element of $B_{K_0}$ we may assume $x$
normalizes $T$. By induction 
this gives (1) and (2). 

For (3), it is straightforward to reduce to the case when $G(\R)$ is
simple. Then a case-by-case check shows that $|K/K_0|\le 2$ except in
type $D_n$. Furthermore the only exception is the adjoint group
$PSO(2n,2n)$, in which case the result can be easily checked. 
This is essentially \cite[Proposition 9.7]{ic4}.
\end{proof}

Let $\lambda\in X^*(T)$
be the highest weight of $\mu_0$ with respect to $B_{K_0}$. 
Then $\mu_0^{x_i}$ has highest weight $x_i\lambda$.
So, after renumbering, we may write $K_1=\langle K_0,x_1,\dots,
x_r\rangle$ where $x_i\lambda=\lambda$ for $1\le i\le r$.

Let $V_\lambda$ be the (one-dimensional) highest weight space of
$\mu_0$. The group $T_1=\langle T,x_1,\dots, x_r\rangle$ acts on
$V_\lambda$.
In the terminology of 
\cite[Definition 1.14(e)]{voganOrange}, $T_1$ is a large Cartan
subgroup of $K_1$, and 
\cite[Theorem 1.17]{voganOrange} implies that there is an
irreducible representation $\mu_1$ of $K_1$,
containing the one-dimensional representation $V_\lambda$ of $T_1$.
Then $\mu_1|_{K_0}=\mu_0$.
\end{proof}

\begin{theorem}
\label{t:schur}
Suppose every irreducible representation of $G(\R)$ is self-dual
(see Corollary \ref{c:everyalt}).
If $\pi$ is an irreducible representation then
\begin{equation}
\epsilon(\pi)=\chi_\pi(z(\Ch\rho)).
\end{equation}
Every irreducible representation is orthogonal if
and only if 
$z(\Ch\rho)=1$. This holds  if $G$ is adjoint.
\end{theorem}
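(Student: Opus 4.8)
The plan is to reduce the general (possibly disconnected, infinite-dimensional) case to the connected finite-dimensional case already handled in Lemma \ref{l:schurfinite}, using lowest $K$-types and the multiplicity-one trick stated just before Lemma \ref{l:schurconnected}. The strategy mirrors the proof of Lemma \ref{l:schurconnected}, but now $K=G^\theta$ may be disconnected, so an extra layer is needed to pass from $K$ to $K_0$; this is exactly what Lemmas \ref{l:schurfinite2}, \ref{l:multfree}, and \ref{l:zrhoK} were set up to supply.

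First I would fix an irreducible representation $\pi$ of $G(\R)$ and let $\mu$ be a lowest $K(\R)$-type of $\pi$; by \cite[Theorem 1.2]{khat} it occurs with multiplicity one, and by hypothesis (via Corollary \ref{c:everyalt}, which gives $-1\in W(K,H_f)$) $\pi$ is self-dual, hence $\mu\simeq\mu^*$. The multiplicity-one observation then gives $\epsilon(\pi)=\epsilon(\mu)$, reducing everything to computing the Frobenius Schur indicator of the irreducible finite-dimensional representation $\mu$ of the (disconnected) complex group $K$. Next, restrict $\mu$ to $K_0$: by Lemma \ref{l:multfree} this restriction is multiplicity free, and one picks an irreducible constituent $\mu_0$. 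Here is where Lemma \ref{l:schurfinite2} enters: $-1\in W(K,H_f)$ means conjugation by a suitable element realizes a Chevalley involution on $K_0$ (acting by inverse on $H_f$, hence sending highest weight to $-$highest weight), so the subgroup $\langle K_0,\text{that element}\rangle$ plays the role of $G^\dagger$ in Lemma \ref{l:schurfinite2}, and applying that lemma (together with multiplicity-one for $\mu_0\hookrightarrow\mu|_{K_0}$ when $\mu_0$ is $K$-stable, or the multiplicity-free restriction when it is not) computes $\epsilon(\mu)$ in terms of either $\epsilon(\mu_0)$ or a value $\chi_{\mu_0}(\delta^2)$ of the $K_0$-central character.

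Then I would invoke Lemma \ref{l:schurfinite} applied to $K_0$ to get $\epsilon(\mu_0)=\chi_{\mu_0}(z(\Ch\rho_{K_0}))$, and use Lemma \ref{l:w0} applied to both $G$ and $K_0$ — choosing a common representative $g\in\Norm_G(H_f)$ of $-1$ — to see $g^2=z(\Ch\rho_G)$ and simultaneously $g^2=z(\Ch\rho_{K_0})$, so that $z(\Ch\rho_G)=z(\Ch\rho_{K_0})$. (Alternatively this last equality is Lemma \ref{l:zrhoK} directly, since $-1\in W(K,H_f)$.) The element $\delta^2$ appearing in the non-self-dual branch of Lemma \ref{l:schurfinite2} is $g^2=z(\Ch\rho_G)$ as well, so in every branch the answer collapses to $\chi_\mu(z(\Ch\rho_G))$; since $z(\Ch\rho_G)\in Z(G)$ acts by a scalar agreeing on $\mu$ and on $\pi$, this equals $\chi_\pi(z(\Ch\rho))$. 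Finally, for the orthogonality statement: if $z(\Ch\rho)=1$ then $\chi_\pi(z(\Ch\rho))=1$ for all $\pi$, so every $\pi$ is orthogonal; conversely if $z(\Ch\rho)\ne1$ then, since $z(\Ch\rho)$ is a nontrivial central element, some irreducible $\pi$ (e.g. a finite-dimensional one, or a principal series with suitable central character) has $\chi_\pi(z(\Ch\rho))=-1$, hence is symplectic. That $z(\Ch\rho)=1$ when $G$ is adjoint is immediate because $\Ch\rho$ then lies in the coweight lattice, so $\exp(2\pi i\Ch\rho)=1$.

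The main obstacle, I expect, is the bookkeeping in the disconnected step — matching up the central elements $z(\Ch\rho_G)$, $z(\Ch\rho_{K_0})$, and the square $\delta^2$ of the Chevalley-involution coset representative — and verifying that the element $g$ realizing $-1\in W(K,H_f)$ can simultaneously be taken as a representative of $-1\in W(K_0,H_f)$ with the right square, so that Lemma \ref{l:w0} applies cleanly to $K_0$ and not merely to $K$. The self-dual-versus-not dichotomy for $\mu_0$ inside $\mu|_{K_0}$ also requires care, but in both cases Lemma \ref{l:schurfinite2} delivers the same final value, so the argument does not actually branch at the level of the conclusion.
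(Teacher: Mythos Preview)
Your proposal is correct and follows essentially the same route as the paper: reduce $\epsilon(\pi)$ to $\epsilon(\mu)$ via the multiplicity-one lowest $K$-type, then to a constituent $\mu_0$ of $\mu|_{K_0}$ via Lemma~\ref{l:multfree}, and split into the two cases $\mu_0\simeq\mu_0^*$ (handled by Lemma~\ref{l:schurfinite} and Lemma~\ref{l:zrhoK}) and $\mu_0\not\simeq\mu_0^*$ (handled by Lemma~\ref{l:schurfinite2} with $K^\dagger=\langle K_0,g\rangle$ and Lemma~\ref{l:w0} applied to $G$). The only minor divergence is that the paper justifies $\mu\simeq\mu^*$ by invoking Corollary~\ref{c:every} (every $K$-type is self-dual), and in the self-dual branch it goes directly to Lemma~\ref{l:zrhoK} rather than trying to apply Lemma~\ref{l:w0} to $K_0$; your ``alternatively'' clause already anticipates this.
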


\begin{proof}
By Corollary \ref{c:every} every $K$-type is self-dual, 
and $-1\in W(K,H_f)$.
Choose a minimal $K$-type $\mu$. Since $\mu$ is self-dual and has
multiplicity one, $\epsilon(\pi)=\epsilon(\mu)$. 

Let $\mu_0$ be an irreducible summand of $\mu|_{K_0}$. 
By Lemma \ref{l:multfree} $\mu_0$ has multiplicity one. 
If $\mu_0$ is self-dual then $\epsilon(\mu)=\epsilon(\mu_0)$, and 
by Lemma \ref{l:schurfinite}
$\epsilon(\mu_0)=\chi_{\mu_0}(z(\Ch\rho_K))$.
By Lemma \ref{l:zrhoK} this equals $\chi_{\mu_0}(z(\Ch\rho))$.

Suppose $\mu_0$ is not self-dual. 
Since $-1\in W(K,H_f)$, choose a representative   $g\in \Norm_K(H_f)$ of
$-1\in W(K,H_f)$, and let 
$K^\dagger=\langle K,g\rangle$. By Lemma \ref{l:schurfinite2}
$\mu^\dagger=\Ind_{K_0}^{K^\dagger}(\mu_0)$ is irreducible, self-dual,
and  of multiplicity one in $\mu$, so
$\epsilon(\mu)=\epsilon(\mu^\dagger)$. 
Since $\mu_0\not\simeq\mu_0^*$, by Lemma \ref{l:schurfinite},
$\epsilon(\mu^\dagger)=\chi_{\mu_0}(g^2)$.
We can also think of $g$ as a representative of $-1\in W(G,H_f)$. 
Since $G$ (unlike $K$) is (necessarily) connected,
by  Lemma \ref{l:w0}, $g^2=z(\Ch\rho_G)$, so again
$\epsilon(\mu)=\chi_{\mu_0}(z(\Ch\rho_G))$.

As in the proof of Lemma \ref{l:schurconnected}, since $z(\Ch\rho_G)\in Z(G(\R))$,
$\chi_{\mu_0}(z(\Ch\rho_G))=\chi_\pi(z(\Ch\rho_G))$. This completes the proof.
\end{proof}
  
\bibliographystyle{plain}
\def\cprime{$'$} \def\cftil#1{\ifmmode\setbox7\hbox{$\accent"5E#1$}\else
  \setbox7\hbox{\accent"5E#1}\penalty 10000\relax\fi\raise 1\ht7
  \hbox{\lower1.15ex\hbox to 1\wd7{\hss\accent"7E\hss}}\penalty 10000
  \hskip-1\wd7\penalty 10000\box7}
  \def\cftil#1{\ifmmode\setbox7\hbox{$\accent"5E#1$}\else
  \setbox7\hbox{\accent"5E#1}\penalty 10000\relax\fi\raise 1\ht7
  \hbox{\lower1.15ex\hbox to 1\wd7{\hss\accent"7E\hss}}\penalty 10000
  \hskip-1\wd7\penalty 10000\box7}
  \def\cftil#1{\ifmmode\setbox7\hbox{$\accent"5E#1$}\else
  \setbox7\hbox{\accent"5E#1}\penalty 10000\relax\fi\raise 1\ht7
  \hbox{\lower1.15ex\hbox to 1\wd7{\hss\accent"7E\hss}}\penalty 10000
  \hskip-1\wd7\penalty 10000\box7}
  \def\cftil#1{\ifmmode\setbox7\hbox{$\accent"5E#1$}\else
  \setbox7\hbox{\accent"5E#1}\penalty 10000\relax\fi\raise 1\ht7
  \hbox{\lower1.15ex\hbox to 1\wd7{\hss\accent"7E\hss}}\penalty 10000
  \hskip-1\wd7\penalty 10000\box7} \def\cprime{$'$} \def\cprime{$'$}
  \def\cprime{$'$} \def\cprime{$'$} \def\cprime{$'$} \def\cprime{$'$}
  \def\cprime{$'$} \def\cprime{$'$}

\enddocument
\end